\documentclass[11pt]{article}
\usepackage{amsfonts, amssymb, amsmath}
\usepackage[none]{hyphenat}
\usepackage{fancyhdr}
\usepackage{graphicx}
\usepackage{float}
\usepackage{lastpage}
\usepackage[latin1]{inputenc}
\usepackage{enumitem}
\usepackage{mathtools}
\usepackage{titling}

\usepackage{lipsum}
\usepackage{cite}
\usepackage{hyperref}
\usepackage{listings}
%

\usepackage{amsthm}
\newtheorem{theorem}{Theorem}[section]
\newtheorem{corollary}{Corollary}[section]
\newtheorem{lemma}[theorem]{Lemma}

\newtheorem{theoremd}{Theorem B}

\newtheorem{theoremB}{Theorem B2}

\newcommand{\RR}{\mathbb{R}}
\newcommand{\QQ}{\mathbb{Q}}
\newcommand{\ZZ}{\mathbb{Z}}
\newcommand{\NN}{\mathbb{N}}
\newcommand{\FF}{\mathbb{F}}
\newcommand{\vu}{\mathbf{u}}

\newcommand{\K}{\operatornamewithlimits{K}}
\newcommand{\bigk}{\operatornamewithlimits{K}}

\title{Badly approximable infinite products of quadratic polynomials}
\author{Dmitry Badziahin\; and\; Cameron Eggins}
\date{}

\begin{document}

\maketitle
\thispagestyle{empty}

\begin{abstract}
We provide a number of conditions on the rational numbers $u$ and
$v$ which ensure that the Laurent series $g_{u,v}(x):=
\prod_{t=0}^\infty (1+ux^{-3^t} + vx^{-2\cdot 3^t})$ is badly
approximable.
\end{abstract}

\section{Introduction}

Given a real number $\xi$, the irrationality exponent of $\xi$ is
defined as follows
$$
\mu(\xi):=\sup\left\{\mu\in\RR\;:\;\left| \xi - \frac{p}{q}\right| <
q^{-\mu}\;\mbox{has i.m. solutions }\; p/q\in\QQ\right\}.
$$
This is one of the most important approximational properties of a
number, which indicates, how well it can be approximated by
rationals in terms of their denominators.

In recent years there was a lot of interest in understanding the
irrationality exponents of Mahler numbers. By Mahler numbers we understand
the values of Mahler functions at integer points. The Mahler functions are in
turn analytical functions $f\in\QQ((z^{-1}))$ which for any $z$ inside their
disc of convergence satisfy the equation of the form
$$
\sum_{i=0}^n P_i(z)f(z^{d^i})=Q(z)
$$
where $n\ge 1, d\ge 2$ are integer, $P_0,\ldots, P_n, Q\in \QQ[z]$ and
$P_0P_n\neq 0$.

One of the first results in this direction was achieved in 2011 by
Bugeaud~\cite{bugeaud_2011}. He showed that the irrationality exponent of the
Thue-Morse numbers equals two. These are the numbers of the form
$f_{tm}(b):=\sum_{n=0}^\infty \frac{t_n}{b^n}$ where $b$ is integer and $t_n$
is the famous Thue-Morse sequence in $\{0,1\}$ which is recurrently defined
as follows: $t_0:=0, t_{2n}:=t_n$ and $t_{2n+1} = 1-t_{2n}$. One can easily
verify that the function $z^{-1}(1 - 2f_{tm}(z))$ satisfies the Mahler
equation
$$
f_{tm}(z) = (z-1)f_{tm}(z^2).
$$
For more results of this type, see~\cite{adamczewski_rivoal_2009},
\cite{coons_2013}, \cite{gww_2014}, \cite{badziahin_2017}.

In~\cite{bugeaud_han_wen_yao_2015}, the authors provide a non-trivial upper
bound for the irrationality exponent of $f(b)$ where the Mahler functions
satisfy
$$
Q(z) = P_0(z) f(z) + P_1(z)f(z^d).
$$
Their bound is quite general and in many cases it is sharp. But that
result is often hard to apply because it requires the knowledge
about the distribution of non-zero Hankel determinants of $f(z)$,
which are not easy to compute. Later Badziahin~\cite{badziahin_2019}
provided the precise formula for $\mu(f(b))$ for a slightly narrower
set of Mahler functions:

\begin{theoremd}
Let $f(z)\in \QQ((z^{-1}))\setminus \QQ(z)$ be a solution of the
functional equation
$$
f(z) = \frac{A(z)}{B(z)} f(z^d),\qquad A,B\in\QQ[z],\; B\neq
0,\quad d\in\ZZ,\; d\ge 2.
$$
Let $b\in \ZZ, |b|\ge 2$ be inside the disc of convergence of $f$
such that $A(b^{d^m})B(b^{d^m}) \neq 0$ for all $m\in\ZZ_{\ge 0}$.
Then
\begin{equation}\label{th1_eq}
\mu(f(b)) = 1 + \limsup_{k\to\infty}\frac{d_{k+1}}{d_k}.
\end{equation}
\end{theoremd}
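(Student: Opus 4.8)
\emph{Setup.} The plan is to route everything through the continued fraction (CF) expansion of $f$ over the non-archimedean field $\QQ((z^{-1}))$, whose convergents $p_k/q_k\in\QQ(z)$ have, by definition of the quantities in the statement, $d_k=\deg q_k$. Since $f\notin\QQ(z)$ this CF is infinite; its partial quotients $a_k\in\QQ[z]$ satisfy $\deg a_k=d_k-d_{k-1}\ge 1$ for $k\ge 1$, and the usual CF identities give $\deg(q_kf-p_k)=-d_{k+1}$, hence $f(z)-p_k(z)/q_k(z)=c_k\,z^{-(d_k+d_{k+1})}\bigl(1+O(z^{-1})\bigr)$ with $c_k\in\QQ^{\times}$. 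On the arithmetic side, classical CF theory applied to the real number $f(b)$, with convergents $P_n/Q_n$, gives $\mu(f(b))=1+\limsup_{n\to\infty}(\log Q_{n+1})/(\log Q_n)$, and among all fractions with denominator below $Q_{n+1}$ the best approximation to $f(b)$ is $P_n/Q_n$. So it suffices to show $\limsup_n (\log Q_{n+1})/(\log Q_n)=\limsup_k d_{k+1}/d_k$, i.e.\ that the ``jumps'' in the CF of $f(b)$ are exactly the ``jumps'' of the integer sequence $(d_k)$.

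\emph{Lower bound.} For $\mu(f(b))\ge 1+\limsup_k d_{k+1}/d_k$ I would fix a subsequence $(k_j)$ realising the $\limsup$ on the right and substitute $z=b$ into the convergents. Clearing the rational coefficients of $p_k,q_k$ by a common positive integer $N_k$ yields a fraction $N_kp_k(b)/N_kq_k(b)\in\QQ$ with $|N_kq_k(b)|\asymp N_k|b|^{d_k}$, and, provided $q_k(b)\ne 0$ and $f(b)-p_k(b)/q_k(b)\ne 0$, the asymptotics above give $|f(b)-p_k(b)/q_k(b)|\asymp|b|^{-(d_k+d_{k+1})}$. Two inputs make this effective: (i) the non-vanishing of $q_k(b)$ and of the evaluated error, which is where the hypothesis $A(b^{d^m})B(b^{d^m})\ne 0$ for all $m$ enters, since iterating $f(z)=(A(z)/B(z))f(z^d)$ expresses these quantities through the factors $A(b^{d^i}),B(b^{d^i})$ and a convergent residual, so the hypothesis precisely excludes degenerate cancellation at $z=b$; and (ii) the estimate $\log N_k=o(d_k)$, obtained by tracking the Mahler recursion through the CF algorithm. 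Granting these, $N_kp_k(b)/N_kq_k(b)$ approximates $f(b)$ with exponent $(d_k+d_{k+1})/(d_k+o(d_k))$, which along $(k_j)$ tends to $1+\limsup_k d_{k+1}/d_k$.

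\emph{Upper bound.} For the reverse inequality I would show that nothing approximates $f(b)$ essentially better than the $p_k(b)/q_k(b)$ do. Given $p/q$ in lowest terms with $q$ large, let $k$ be determined by $|b|^{d_k}\le q<|b|^{d_{k+1}}$. If $p/q$ is not the reduced form of $p_k(b)/q_k(b)$, then $|p/q-p_k(b)/q_k(b)|\ge (q\,|N_kq_k(b)|)^{-1}\gg |b|^{-d_k-o(d_k)}/q$, whereas $|f(b)-p_k(b)/q_k(b)|\asymp|b|^{-(d_k+d_{k+1})}$ is smaller than this (after replacing $k$ by $k+1$ when $q$ lies in the thin top part of its interval, a case handled identically by comparison with $p_{k+1}(b)/q_{k+1}(b)$); the triangle inequality then forces $|f(b)-p/q|\gg |b|^{-d_k-o(d_k)}/q$, so the exponent of $p/q$ is at most $2+o(1)$. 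If instead $p/q$ is the reduced form of $p_k(b)/q_k(b)$, then $|f(b)-p/q|\asymp|b|^{-(d_k+d_{k+1})}$ (again using non-degeneracy at $z=b$) and $q\le |N_kq_k(b)|\asymp|b|^{d_k+o(d_k)}$ provided the cancellation in $p_k(b)/q_k(b)$ is only $|b|^{o(d_k)}$, so the exponent is at most $1+d_{k+1}/d_k+o(1)$; should such cancellation be larger, $q$ falls into an earlier interval and one is thrown back into the first case. Letting $q\to\infty$ and taking $\limsup$ gives $\mu(f(b))\le 1+\limsup_k d_{k+1}/d_k$.

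\emph{Main obstacle.} The hard part is the upper bound, and inside it the package of non-degeneracy statements: that $|f(b)-p_k(b)/q_k(b)|$ is neither negligibly small nor large but exactly of order $|b|^{-(d_k+d_{k+1})}$; that $\log N_k=o(d_k)$; that the cancellation in the fractions $p_k(b)/q_k(b)$ is $|b|^{o(d_k)}$; and that there are no coincidental identities $p_k(b)/q_k(b)=p_{k'}(b)/q_{k'}(b)$. Each of these is exactly where the Mahler equation is genuinely used: iterating $f(z)=(A(z)/B(z))f(z^d)$ produces a recursion expressing the CF data of $f$ in terms of that of $f(z^d)$ together with $A$ and $B$, and the condition $A(b^{d^m})B(b^{d^m})\ne 0$ for every $m$ is precisely what survives that recursion upon specialising to $z=b$. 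Carrying out this bookkeeping carefully — and checking that the self-similar structure it imposes on $(d_k)$ is compatible with passing to $\limsup_k d_{k+1}/d_k$ — is the technical core of the argument.
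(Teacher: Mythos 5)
This statement is Theorem (B), which the paper imports from \cite{badziahin_2019} without proof, so there is no in-paper argument to compare your sketch against; I am judging it against the argument in that reference. Your overall route is the standard (and correct) one: evaluate the Laurent-series convergents $p_k/q_k$ at $z=b$ to get rational approximations to $f(b)$, show they approximate to within $|b|^{-(d_k+d_{k+1})(1+o(1))}$ with denominators $|b|^{d_k(1+o(1))}$, and run the two-case dichotomy (``$p/q$ is, or is not, one of these convergents'') to obtain matching upper and lower bounds for $\mu(f(b))$.

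The genuine gap is that the quantitative inputs you collect under ``main obstacle'' are not side conditions but the entire content of the theorem, and your sketch makes no progress on them. First, the bound $\log N_k=o(d_k)$ on the common denominator (and, more generally, on the heights of the coefficients) of $p_k,q_k$ is simply false for a generic element of $\QQ[[z^{-1}]]$: the heights of the continued fraction data can grow far faster than $C^{d_k}$, which would destroy the asymptotics $Q_k=|b|^{d_k(1+o(1))}$ and leave you with the formula only up to constants depending on $b$, not the exact equality claimed. Establishing $\log(\mathrm{height})=o(d_k)$ requires the self-similar structure of the convergents forced by $B(z)f(z)=A(z)f(z^d)$ --- roughly, that the convergents at the indices where $d_{k+1}/d_k$ jumps are obtained from earlier ones by substituting $z\mapsto z^d$ and multiplying by partial products of the $A(z^{d^i})/B(z^{d^i})$, so heights grow like a power of $d_k$ rather than exponentially in $d_k$. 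That structural lemma is the technical heart of the cited proof, and your sketch never states it. Second, the two-sided estimate $|f(b)-p_k(b)/q_k(b)|\asymp |b|^{-(d_k+d_{k+1})}$ does not follow from the formal identity $\|q_kf-p_k\|=-d_{k+1}$: evaluation at $z=b$ can kill or inflate the leading term. The upper bound needs height control of the tail coefficients, and the lower bound is normally extracted from the determinant identity $p_{k+1}q_k-p_kq_{k+1}=\pm\beta_1\cdots\beta_{k+1}$ together with a height bound on $\prod_i\beta_i$; neither appears in your argument. Without these, both your lower-bound and upper-bound paragraphs are conditional, so the plan is sound but as written it reduces the theorem to unproved claims that are exactly as hard as the theorem itself.
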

Here, $d_k$ is the degree of the denominator of the $k$'th convergent of
$f(z)$. We discuss these notions in Section~\ref{sec_hankel}.

The upshot is that the irrationality measure of $f(b)$, given
by~\eqref{th1_eq} is completely determined by the sequence $d_k$.
However determining this sequence for a precise Mahler function
$f(z)$ may be problematic. In 2017, the first
author~\cite{badziahin_2017} verified that $d_k= k$ for all
functions $g_u(z)$ which satisfy $g_u(z) = (z+u)g_u(z^2)$, $u\in\QQ$
and $u\neq 0,1$. Equivalently, such functions can be written as the
infinite products
$$
g_u(z) = z^{-1}\prod_{t=0}^\infty (1+uz^{-2^t}).
$$
Notice that $g_0(z)$ and $g_1(z)$ are rational functions. Therefore
we now have a complete understanding of irrationality exponents of
$g_u(b)$.

The next natural case to investigate are the following infinite products:
$$
g_{u,v}(z) = z^{-1} \prod_{t=0}^\infty (1+uz^{-3^t} + vz^{-2\cdot
3^t}),\quad u,v\in\QQ.
$$
In~\cite{badziahin_2017}, Badziahin started the investigation of
sequences $d_k$ for various pairs $(u,v)$ of integer numbers. It was
shown that $\limsup_{k\to\infty} d_{k+1}/d_k > 1$ for:
\begin{enumerate}
\item $(u,v) = (\pm u, u^2)$, $u\in\QQ$;
\item $(u,v) = (\pm s^3, -s^2(s^2+1))$, $s\in\QQ$;
\item $(u,v) = (\pm 2,1)$.
\end{enumerate}
Later, it was shown \cite{badziahin_2019} that in the first two cases the
value $\limsup_{k\to\infty} d_{k+1}/d_k$ is equal to two while in the third
case it is $\frac75$. It is shown in the same paper that for functions
$g_{u,v}(z)$ the condition $\limsup_{k\to\infty} d_{k+1}/d_k=1$ is equivalent
to $\forall k\in\NN$, $d_k = k$.

It is conjectured \cite[Conjecture A]{badziahin_2017} that $d_k = k$ for all
$k\in\ZZ$ for the other pairs $(u,v)\in\ZZ^2$. This conjecture is
verified\cite{badziahin_2017} for large sets of pairs $(u,v)$. In particular
it is true if $u=0$ or $v=0$ and also for the region
$$
\big\{(u,v)\in\QQ^2\;:\; u^2\ge 6,\; v\ge \max\{3u^2-1,
2u^2+8\}\big\}.
$$
Some local conditions on $u$ and $v$ modulo 3, ensuring $d_k=k$ are
also provided in~\cite{bugeaud_han_wen_yao_2015}. The purpose of
this note is to cover as many other pairs $(u,v)\in\ZZ^2$ as
possible and hence make a contribution to the conjecture above.

The main result of this paper provides a number of local conditions
on $(u,v)$ modulo any prime $p\ge 3$ which ensure that $d_k=k$ for
all $k$. In particular, for $p=3$ they coincide with those
from~\cite{bugeaud_han_wen_yao_2015}.

\begin{theorem}\label{th1}
Let $p\geq 3$ be prime and $(u,v)\in \ZZ^2$ satisfy one of the properties
\begin{gather}
    u^2 \equiv 3,\quad v \equiv 1 \pmod p; \label{case1}\\
    u^2 \equiv -3,\quad v \equiv -1 \pmod p; \label{case2}\\
    u \equiv \pm \varphi,\; v \equiv 0 \pmod p,\quad \mbox{where } \varphi^2 + \varphi + 1 \equiv 0\pmod p; \label{case3}\\
    u \equiv \pm \varphi,\; v \equiv -1 \pmod p,\quad \mbox{where } \varphi^4 + 4\varphi^2 + 1 \equiv 0\pmod p; \label{case4}\\
    u \equiv \pm \varphi,\; v \equiv \delta \pmod p,\quad \mbox{where } \delta^2 - \delta + 1 \equiv 0,\; \varphi^2 \equiv 2\delta \pmod p; \label{case5}\\
    u \equiv 0,\; v \equiv \pm \delta \pmod p,\quad \mbox{where } \delta^2 + \delta + 1 \equiv 0\pmod
    p; \label{case6}\\
    u=\pm 2\delta^2, v = \delta\;(\mathrm{mod}\; p), \; \mbox{where } \delta^2+\delta+1 = 0, \; p\neq 3. \label{case7}
\end{gather}
Then all the partial quotients of the continued fraction for the Mahler
function $g_{u,v}(z)$ are linear.
\end{theorem}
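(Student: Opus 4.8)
The product $g:=g_{u,v}$ satisfies the Mahler equation $g(z)=(z^2+uz+v)\,g(z^3)$: peel off the $t=0$ factor of $\prod_{t\ge0}(1+uz^{-3^t}+vz^{-2\cdot3^t})$ and reindex the remaining tail as $z^3g(z^3)$. Since $u,v\in\ZZ$, every coefficient of $g$ is an integer, so each Hankel determinant $H_N:=\det\bigl(c_{i+j-1}\bigr)_{1\le i,j\le N}$, where $g(z)=\sum_{n\ge1}c_nz^{-n}$, lies in $\ZZ$, and hence $H_N\not\equiv0\pmod p$ forces $H_N\neq0$ over $\QQ$. By the correspondence recalled in Section~\ref{sec_hankel} between the degrees $d_k$ of the convergent denominators and the non-vanishing of the Hankel determinants — under which every partial quotient of $g$ is linear exactly when $H_N\neq0$ for all $N\ge1$, equivalently $d_k=k$ for all $k$ — it suffices to prove
$$
H_N\not\equiv 0\pmod p\qquad\text{for all }N\ge1 .
$$
From now on I work over $\FF_p$ and keep writing $g$ for the reduction.

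Comparing coefficients of $z^{-m}$ in $g(z)=(z^2+uz+v)g(z^3)$ gives $c_1=1$ and, for every $n\ge1$,
$$
c_{3n-2}=c_n,\qquad c_{3n-1}=u\,c_n,\qquad c_{3n}=v\,c_n,
$$
so each $c_n$ is a single monomial $u^av^b$ over $\FF_p$. In particular all $c_n$ are units in cases \eqref{case1}, \eqref{case2}, \eqref{case4}, \eqref{case5}, \eqref{case7}, where $uv\not\equiv0$, while in \eqref{case3} and \eqref{case6} exactly one of $u,v$ vanishes and the surviving $c_n$ are those whose base-$3$ index avoids the corresponding digit. This $3$-kernel relation is exactly the input needed to run a Hankel-determinant recursion of the kind developed in \cite{bugeaud_han_wen_yao_2015}: one expresses $H_{3N}$, $H_{3N+1}$, $H_{3N+2}$ — together with the Hankel minors of a bounded set of auxiliary shifts such as $zg(z)$ and $z^2g(z)$ — in terms of determinants of strictly smaller index, with prefactors that are explicit polynomials in $u$ and $v$. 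The seven congruence conditions enter precisely here: each of them forces the roots of $z^2+uz+v$, computed over its splitting field modulo $p$, to be roots of unity of bounded order — or $0$ together with such a root in case \eqref{case3}, with case \eqref{case7} additionally forcing a repeated root — and this is what keeps all the prefactors (various resultants $\operatorname{Res}(z^2+uz+v,\,z^{3^m}-1)$, discriminant-type quantities, and small combinations such as $uv$ and $H_2=v-u^2$) non-zero in $\FF_p$.

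With the recursion in hand, the proof finishes by strong induction on $N$: the finitely many base cases $H_1,\dots,H_{N_0}$ are short polynomial expressions in $u,v$ which the hypotheses keep away from $0$, and the inductive step propagates non-vanishing from level $N$ to levels $3N$, $3N+1$, $3N+2$. Hence $H_N\neq0$ in $\FF_p$ for every $N$, so $H_N\neq0$ over $\QQ$, so $d_k=k$ for all $k$, and every partial quotient of $g_{u,v}(z)$ is linear. For $p=3$ this bookkeeping reproduces the conditions of \cite{bugeaud_han_wen_yao_2015}.

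I expect the main obstacle to be the middle step: producing the Hankel-determinant recursion in a usable closed form and identifying exactly which combinations of $u$ and $v$ occur as its prefactors. In contrast to the degree-$1$ multiplier and the doubling map treated by the first author in \cite{badziahin_2017}, the quadratic multiplier together with the tripling $z\mapsto z^3$ forces several interlocking families of determinants to be carried simultaneously, and the cases \eqref{case1}–\eqref{case7} are presumably exactly those in which a crude non-vanishing test (e.g.\ $z^2+uz+v$ coprime to every $z^{3^m}-1$) fails at some level but a sharper, level-adapted estimate still closes the induction. A secondary difficulty is purely organisational: handling \eqref{case1}–\eqref{case7} with as much shared machinery as possible rather than as seven independent computations.
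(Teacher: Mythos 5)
There is a genuine gap: the proposal is a plan whose central engine is never built. You correctly reduce the problem to showing that a family of quantities attached to $g_{u,v}$ does not vanish modulo $p$, and your setup (the Mahler equation $g(z)=(z^2+uz+v)g(z^3)$, the coefficient relations $c_{3n-2}=c_n$, $c_{3n-1}=uc_n$, $c_{3n}=vc_n$, and the lift from $\FF_p$ to $\QQ$) is sound. But the step that actually carries the proof --- ``a Hankel-determinant recursion of the kind developed in \cite{bugeaud_han_wen_yao_2015}'' expressing $H_{3N}$, $H_{3N+1}$, $H_{3N+2}$ in terms of smaller determinants with explicit prefactors --- is only asserted to exist, not derived, and you yourself flag it as the main obstacle. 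Without that recursion in closed form there is nothing to run the induction on, and without it one cannot check that conditions \eqref{case1}--\eqref{case7} keep every prefactor nonzero \emph{at every level}. The roots-of-unity observation does not substitute for this check: knowing that the roots of $z^2+uz+v$ have bounded multiplicative order in $\overline{\FF_p}$ does not by itself imply non-vanishing of the relevant resultants or minors (roots of unity are just as capable of killing a resultant $\operatorname{Res}(z^2+uz+v,\,z^{3^m}-1)$ as of preserving it), and the seven conditions are precisely the delicate, non-generic cases where a crude test fails.

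For comparison, the paper's proof supplies exactly the missing engine, but in a different coordinate system: instead of Hankel determinants it invokes Theorem B2 of \cite{badziahin_2019}, which gives an explicit recursion for the continued-fraction data $(\alpha_i,\beta_i)$ of $g_{u,v}$ directly, with linearity of all partial quotients equivalent to $\beta_i\neq 0$ for all $i$. The proof of Theorem~\ref{th1} then consists of seven inductive lemmas showing that, under each congruence condition, the sequences $(\alpha_i,\beta_i)$ modulo $p$ settle into an explicit self-similar pattern (indexed by residues modulo $9$ and tied back to indices near $i/3$) in which every $\beta_i$ is visibly a unit. If you want to complete your route, you would essentially have to rederive an equivalent of that recursion on the Hankel side and then perform the same seven case-by-case inductions; as written, the proposal does not prove the theorem.
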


Theorem~\ref{th1} together with~{\bf (B)} imply the following

\begin{corollary}
Let $p\ge 3$ be prime and $(u,v)\in \ZZ^2$ satisfy one of the
properties~2 -- 8 of Theorem~\ref{th1}. Then for any integer $b$
such that $|b|\ge 2$ and $g_{u,v}(b)\neq 0$ one has
$$
\mu(g_{u,v}(b)) = 2.
$$
\end{corollary}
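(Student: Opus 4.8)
We deduce the Corollary from Theorem~\ref{th1} and from Theorem~B quoted above; the substantive input is Theorem~\ref{th1}, and what remains is to cast $g_{u,v}$ into the precise form demanded by Theorem~B and to match up the two non-vanishing hypotheses. First I would record the Mahler equation satisfied by $g_{u,v}$: splitting off the $t=0$ factor of the defining product, reindexing the remaining factors as a product over $z^{3}$, and clearing the powers of $z^{-1}$ gives
$$
g_{u,v}(z)=\big(z^{2}+uz+v\big)\,g_{u,v}(z^{3}).
$$
Thus, in the notation of Theorem~B, we take $A(z)=z^{2}+uz+v$, $B(z)\equiv 1$ and $d=3$. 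The coefficient of $z^{-n}$ in $g_{u,v}$ equals $u^{a(n-1)}v^{b(n-1)}$, where $a(m)$ and $b(m)$ count the digits $1$ and $2$ in the base-$3$ expansion of $m$; this grows at most polynomially in $n$, so $g_{u,v}\in\QQ((z^{-1}))$ converges for $|z|>1$ and every integer $b$ with $|b|\ge 2$ lies inside its disc of convergence. Finally, under any of the congruence conditions of Theorem~\ref{th1} its conclusion forces every partial quotient of $g_{u,v}$ to be linear, in particular there are infinitely many of them, so $g_{u,v}\notin\QQ(z)$.

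Next I would reconcile the two non-vanishing conditions. The $t$-th factor in the product defining $g_{u,v}(b)$ equals $b^{-2\cdot 3^{t}}\big(b^{2\cdot 3^{t}}+ub^{3^{t}}+v\big)=b^{-2\cdot 3^{t}}A(b^{3^{t}})$. For $|b|\ge 2$ the series $\sum_{t\ge 0}\big(ub^{-3^{t}}+vb^{-2\cdot 3^{t}}\big)$ converges absolutely, so the infinite product converges and vanishes precisely when one of its factors does, i.e.\ precisely when $A(b^{3^{t}})=0$ for some $t\ge 0$. As $B\equiv 1$, the requirement ``$A(b^{3^{m}})B(b^{3^{m}})\ne 0$ for all $m\ge 0$'' of Theorem~B is therefore equivalent to $g_{u,v}(b)\ne 0$, which is exactly the assumption of the Corollary.

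With these pieces in hand the Corollary follows. By Theorem~\ref{th1} each partial quotient $a_i$ of the continued fraction of $g_{u,v}(z)$ has $\deg a_i=1$, so $d_k=\sum_{i=1}^{k}\deg a_i=k$ for every $k$, and hence $\limsup_{k\to\infty}d_{k+1}/d_k=1$; Theorem~B, via~\eqref{th1_eq}, then yields $\mu(g_{u,v}(b))=1+1=2$ for every integer $b$ with $|b|\ge 2$ and $g_{u,v}(b)\ne 0$. I do not expect any real obstacle beyond Theorem~\ref{th1} itself: the only steps requiring a little care are the bookkeeping in the passage from $g_{u,v}(z)$ to $g_{u,v}(z^{3})$, the translation between the two non-vanishing conditions, and the (immediate) remark that an infinite continued fraction with linear partial quotients forces both the irrationality of $g_{u,v}$ and the equalities $d_k=k$.
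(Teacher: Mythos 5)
Your proposal is correct and follows exactly the route the paper intends (the paper gives no written proof, simply asserting that the Corollary follows from Theorem~\ref{th1} together with Theorem~B): linear partial quotients give $d_k=k$, hence $\limsup d_{k+1}/d_k=1$ and $\mu=2$ by~\eqref{th1_eq}. Your extra verifications --- the functional equation $g_{u,v}(z)=(z^2+uz+v)g_{u,v}(z^3)$, convergence for $|b|\ge 2$, irrationality of $g_{u,v}$ from the infinite continued fraction, and the equivalence of $A(b^{3^m})\neq 0$ for all $m$ with $g_{u,v}(b)\neq 0$ --- are all accurate and fill in details the paper leaves implicit.
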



\section{Continued Fractions of Laurent Series}\label{sec_hankel}

Let $\FF$ be a field. Consider the set $\FF[[z^{-1}]]$ of Laurent
series together with the valuation: $||\sum_{k=-d}^\infty
c_kz^{-k}|| = d$, the biggest degree $d$ of $x$ having non-zero
coefficient $c_{-d}$. For example, for polynomials $f(z)$ the
valuation $||f(z)||$ coincides with their degree. It is well known
that in this setting the notion of continued fraction is well
defined. In other words, every $f(z)\in \FF[[z^{-1}]]$ can be
written as
$$
f(z) = [a_0(z), a_1(z),a_2(z),\ldots] = a_0(z) + \frac{1}{a_1(z) +
\frac{1}{a_2(z) + \cdots}} ,
$$
where the $a_i(z)$ are non-zero polynomials of degree at least 1,
$i\in\ZZ_{\ge 0}$. We refer the reader to a nice
survey~\cite{poorten_1998} for more properties of the continued
fractions of Laurent series.

It will be more convenient for us to renormalise this continued
fraction to the form
\begin{equation}
    f(z) = a_0(z) + \frac{\beta_1}{a_1(z) + \frac{\beta_2}{a_2(z) + \frac{\beta_3}{a_3(z) + \cdots}}} =: a_0(z) + \K_{i=1}^\infty \frac{\beta_i}{a_i(z)}
\end{equation}
where $\beta_i\in\FF\setminus\{0\}$ are constants and $a_i(z)\in
\FF[z]$ are monic polynomials for $i\ge 1$.

By analogy with the classical continued fractions over $\RR$, by
$k$'th convergent of $f$ we denote the rational function
$$
\frac{p_k(z)}{q_k(z)}:= a_0(z) + \K_{i=1}^k \frac{\beta_i}{a_i(z)}.
$$
They satisfy the following recurrent relation
\begin{equation}\label{convp}
p_0(z) = a_0(z), \quad p_1(z) = a_0(z)a_1(z) + \beta_1,\quad p_n(z)
= a_n(z)p_{n-1}(z) + \beta_np_{n-2}(z),
\end{equation}
\begin{equation}\label{convq}
q_0(z) = 1, \quad q_1(z) = a_1(z),\quad q_n(z) = a_n(z)q_{n-1}(z) +
\beta_nq_{n-2}(z).
\end{equation}
By $d_k$, we denote the degree (or the valuation) of the denominater $q_k(z)$ of $k$'th convergent of $f(z)$.

By analogy with the classical case of real numbers, we call a series
$f(z)\in\FF[[z^{-1}]]$ badly approximable if there exists an absolute
constant $M$ such that $\forall k\in\NN$, $||a_k(z)||\le M$.
Formulae~\eqref{convp},~\eqref{convq} suggest that $||a_k(z)|| = d_{k} -
d_{k-1}$ therefore an equivalent condition for badly approximable series is
$d_k - d_{k-1} \le M$.

Coming back to the series $g_{u,v}(z)$, it is known (see~\cite[Proposition
1]{badziahin_2017}) that $g_{u,v}(z)$ is badly approximable if and only if
$d_k=k$ for all positive integer $k$. Now the main tool in the proof of
Theorem~\ref{th1} is the following criterion~\cite[Theorems
2,3]{badziahin_2019} which ensures that condition for $g_{u,v}(z)$.

\begin{theoremB}
Let $\vu=(u,v)\in\QQ^2$. Let the sequences $\alpha_i$ and $\beta_i$
of rational numbers be computed by the recurrent formulae
\begin{equation}\label{init_d3}
\begin{array}{lll}
\alpha_1 = -u,&\displaystyle \alpha_2 =
\frac{u(2v-1-u^2)}{v-u^2},&\displaystyle\alpha_3 =
\frac{-u(v-1)}{v-u^2};\\[2ex]
\beta_1 = 1,& \beta_2 = u^2-v,&\displaystyle\beta_3 =
\frac{u^2+u^4+v^3 - 3u^2v}{(v-u^2)^2}.
\end{array}
\end{equation}
and
\begin{equation}\label{recur_d3}
\begin{array}{l}
\displaystyle \alpha_{3k+4} = -u,\quad \beta_{3k+4} =
\frac{\beta_{k+2}}{\beta_{3k+3}\beta_{3k+2}};\\[1ex]
\displaystyle \beta_{3k+5} = u^2 - v - \beta_{3k+4},\quad
\alpha_{3k+5} = u -
\frac{\alpha_{k+2}+uv-\alpha_{3k+2}\beta_{3k+4}}{\beta_{3k+5}}\\[1ex]
\alpha_{3k+6} = u-\alpha_{3k+5},\quad \beta_{3k+6} = v -
\alpha_{3k+5}\alpha_{3k+6}.
\end{array}
\end{equation}
for any $k\in\ZZ_{\ge 0}$. If all algebraic operations in these
formulae are valid and $\beta_i\neq 0$ for all $i\in\ZZ$ then
$$
g_\vu(z) = a_0(z) + \bigk_{i=1}^\infty \frac{\beta_i}{a_i(z)},\qquad
a_i(z) = z+\alpha_i,
$$
that is, all partial quotients of $q_\vu(z)$ are linear.
\end{theoremB}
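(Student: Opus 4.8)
The plan is to derive the Mahler functional equation satisfied by $g_\vu$ and then to recover the announced continued fraction from it by an induction that proceeds on blocks of three partial quotients, mirroring the exponent $d=3$ of the equation. Writing $P(w)=1+uw^{-1}+vw^{-2}$, one has $g_\vu(z)=z^{-1}\prod_{t\ge0}P(z^{3^t})$, and splitting off the factor $t=0$ and reindexing the remaining product over $z^3$ gives
\begin{equation}\label{mahler}
g_\vu(z) = R(z)\,g_\vu(z^3),\qquad R(z):=z^2+uz+v.
\end{equation}
Now let $p_n,q_n$ be the polynomials produced by~\eqref{convp},~\eqref{convq} from the data $a_0=0$, $a_i=z+\alpha_i$ and $\beta_i$ of~\eqref{init_d3},~\eqref{recur_d3}, which by hypothesis are well defined with every $\beta_i\neq0$. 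Since each $a_i$ ($i\ge1$) is monic of degree $1$, the uniqueness of the continued fraction expansion of a series in $\QQ((z^{-1}))$ (see~\cite{poorten_1998}) reduces the theorem to a single statement: that the convergents $p_n/q_n$ tend to $g_\vu$, i.e.\ that the errors $E_n:=q_ng_\vu-p_n$ satisfy $||E_n||=-(n+1)$ for all $n$, a strictly decreasing sequence of valuations.

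Two facts drive the induction. The first is \emph{local}: since $p_n$ and $q_n$ obey the same three-term recurrence, so does the error,
\begin{equation}\label{errrec}
E_n = (z+\alpha_n)E_{n-1}+\beta_nE_{n-2}.
\end{equation}
Granting $||E_{n-1}||=-n$ and $||E_{n-2}||=-(n-1)$, both summands in~\eqref{errrec} have valuation $-(n-1)$, so reaching $||E_n||=-(n+1)$ forces the two leading Laurent coefficients on the right to cancel; writing out these two cancellation conditions yields $\beta_n=-c_{n-1}/c_{n-2}$ and then $\alpha_n$ in terms of the two top coefficients $c_{n-1},c_{n-2}$ of $E_{n-1},E_{n-2}$, which is where the requirement $\beta_n\neq0$ (equivalently $c_{n-1}\neq0$) enters. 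The second fact is \emph{global} and supplies the closed form: combining~\eqref{mahler} with the identities $q_{3k}(z)=q_k(z^3)$ and $p_{3k}(z)=R(z)p_k(z^3)$ yields the self-referential relation
\begin{equation}\label{selfref}
E_{3k}(z)=R(z)\,E_k(z^3),
\end{equation}
whence $||E_{3k}||=2+3||E_k||=-(3k+1)$ automatically. Relation~\eqref{selfref} is exactly what lets the coefficients at index $\approx3k$ needed in~\eqref{errrec} be expressed through the data at index $\approx k$, explaining the appearance of $\alpha_{k+2},\beta_{k+2}$ in~\eqref{recur_d3}.

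I would then run a strong induction on the block index $m$, block $m$ consisting of indices $3m-2,3m-1,3m$ and carrying the invariants $q_{3m}=q_m(z^3)$, $p_{3m}=R\,p_m(z^3)$ and $||E_j||=-(j+1)$ for all $j\le3m$. The base case covers indices $1,2,3$ and is a direct Laurent-series computation of the first three partial quotients, matching~\eqref{init_d3}; one checks for instance that $\alpha_2+\alpha_3=u$ and hence $p_3=z^2+uz+v=R\,p_1(z^3)$, and similarly $q_3=z^3-u=q_1(z^3)$. For the inductive step one starts from $q_{3(m-1)}=q_{m-1}(z^3)$ and must show that the three linear steps prescribed by~\eqref{recur_d3} compose into the single cubic step $q_m(w)=(w+\alpha_m)q_{m-1}(w)+\beta_mq_{m-2}(w)$ of $g_\vu$ (with $w=z^3$) after restoring the factor $R$; equivalently, that the ordered product of the three transfer matrices $\bigl(\begin{smallmatrix}z+\alpha_i&\beta_i\\1&0\end{smallmatrix}\bigr)$ for $i=3m-2,3m-1,3m$ carries the convergent pair $(R\,p_{m-1}(z^3),q_{m-1}(z^3))$ to $(R\,p_m(z^3),q_m(z^3))$. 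The $\beta$-formulas then reflect the determinant identity $p_nq_{n-1}-p_{n-1}q_n=(-1)^{n-1}\beta_1\cdots\beta_n$ compared across $z\mapsto z^3$ (for instance the relation $\beta_{3m-2}=\beta_m/(\beta_{3m-3}\beta_{3m-4})$ is of this type), while the $\alpha$-formulas come from the two-fold cancellation in~\eqref{errrec}.

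The main obstacle is exactly this block-composition identity: verifying that the explicit quotients of~\eqref{recur_d3}, obtained by dividing the cubic step by its leading linear factor $z-u$ and tracking the two successive remainders, really interpolate between $E_{3(m-1)}$ and $E_{3m}$ with the valuation dropping by one at each intermediate index. It is a finite but delicate polynomial computation, and it is here that the hypotheses are decisive: validity of all operations guarantees no division in~\eqref{recur_d3} is by zero, and $\beta_i\neq0$ guarantees that the subdominant term $\beta_iq_{i-2}$ in~\eqref{convq} is genuinely present yet of strictly smaller degree, so that $\deg q_i=i$ exactly and no block collapses into a quotient of degree greater than one. Once~\eqref{errrec}--\eqref{selfref} give $||E_n||=-(n+1)$ for every $n$ with each $a_i$ monic linear, uniqueness of the continued fraction forces $g_\vu(z)=a_0(z)+\K_{i=1}^\infty\frac{\beta_i}{z+\alpha_i}$, so all partial quotients are linear.
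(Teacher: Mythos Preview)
The paper does not actually prove Theorem~B2: it is quoted verbatim from \cite[Theorems~2,3]{badziahin_2019} and used as a black box. So there is no proof in the present paper to compare your proposal against.

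That said, your outline is the right strategy and is essentially the one carried out in the cited reference. The functional equation $g_\vu(z)=R(z)g_\vu(z^3)$ with $R(z)=z^2+uz+v$ is correct, and the self-similarity $q_{3m}(z)=q_m(z^3)$, $p_{3m}(z)=R(z)p_m(z^3)$ is exactly the structural fact that makes the $3$-block recursion~\eqref{recur_d3} work; your verification that $\alpha_2+\alpha_3=u$ and $p_3=R(z)$ is the correct base case. The interpretation of $\beta_{3m-2}=\beta_m/(\beta_{3m-3}\beta_{3m-4})$ via the determinant identity is also the right heuristic.

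Where your write-up remains a proposal rather than a proof is precisely the step you flag yourself: the block-composition identity, namely that the product of the three transfer matrices for indices $3m-2,3m-1,3m$ carries $(R\,p_{m-1}(z^3),q_{m-1}(z^3))$ to $(R\,p_m(z^3),q_m(z^3))$. This is the entire content of the theorem, and you have described its shape but not executed it. In particular, the derivation of the specific formulae for $\alpha_{3k+5}$ and $\beta_{3k+5},\beta_{3k+6}$ in~\eqref{recur_d3} requires tracking not just the leading Laurent coefficient of each $E_n$ but the first two, and checking that the recursion~\eqref{recur_d3} is exactly what forces the required double cancellation at indices $3m-1$ and $3m$. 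Until that computation is written out, the argument is a plan (a correct one) rather than a proof.
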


%
%
%

\section{Proof of Theorem~\ref{th1}}

The main idea of the proof is that if a pair $(u,v)$ satisfies one
of the conditions~2 -- 8 of Theorem~\ref{th1} then the sequences
$\alpha_i$ and $\beta_i$ from Theorem~B2 modulo $p$ follow a nice
pattern and moreover $\beta_i$ never congruent to zero modulo $p$.
That immediately implies that for all $i\in\NN$, $\beta_i\neq 0$ and
Theorem~B2 implies the required result.

While in each of the cases~\eqref{case1} --~\eqref{case7} the
pattern for sequences $\alpha_i$ and $\beta_i$ is different, the
proofs are extremely similar. We will provide a detailed proof in
the case~\eqref{case1} and only outline the proofs in the remaining
cases~\eqref{case2} --~\eqref{case7}.

\begin{lemma}
If $u^2 \equiv 3 \pmod p$ and  $v \equiv 1 \pmod p$ for odd prime $p$, then
the sequences $(\alpha_i)_{i\in\mathbb{N}}$ and $(\beta_i)_{i\in\mathbb{N}}$
are given by the formula for all $k\ge 0$:
\begin{gather}
    \alpha_{3k+1} \equiv -u,\; \alpha_{3k+2} + \alpha_{3k+3} \equiv u \pmod p;\\
    \alpha_{9k+2} \equiv u,\; \alpha_{9k+5} \equiv \alpha_{3k+3},\; \alpha_{9k+8} \equiv 0 \pmod p;\\
    \beta_1 \equiv 1,\; \beta_2 \equiv 2,\; \beta_{k+3} \equiv 1 \pmod
    p.
\end{gather}

\end{lemma}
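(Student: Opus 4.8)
The plan is to prove the asserted congruences by induction on $k$, using the recurrences~\eqref{init_d3} and~\eqref{recur_d3} reduced modulo $p$. First I would verify the base cases directly: substituting $v\equiv 1$ and $u^2\equiv 3$ into~\eqref{init_d3}, one computes $\alpha_1\equiv -u$, $\beta_1\equiv 1$, $\beta_2 = u^2-v\equiv 3-1 = 2$, and $\alpha_2 = u(2v-1-u^2)/(v-u^2)\equiv u(2-1-3)/(1-3) = u(-2)/(-2) = u$, while $\alpha_3 = -u(v-1)/(v-u^2)\equiv 0$; so $\alpha_2+\alpha_3\equiv u$ and $\alpha_{9\cdot 0+2}=\alpha_2\equiv u$. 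For $\beta_3$ one checks $u^2+u^4+v^3-3u^2v\equiv 3+9+1-9 = 4$ and $(v-u^2)^2\equiv 4$, giving $\beta_3\equiv 1$, matching $\beta_{k+3}\equiv 1$ at $k=0$. Note that all these denominators are $\equiv \pm 2\not\equiv 0\pmod p$ since $p$ is odd, so the algebraic operations are valid mod $p$; this needs to be recorded explicitly.

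Next, assuming the formulas hold for all indices up to $3k+3$, I would push the induction through the block of three steps in~\eqref{recur_d3} that produces indices $3k+4, 3k+5, 3k+6$. The key simplification is that the inductive hypothesis collapses $\beta_i\equiv 1$ for every $i\ge 3$ and $\beta_2\equiv 2$, so $\beta_{3k+4} = \beta_{k+2}/(\beta_{3k+3}\beta_{3k+2})$: when $k\ge 1$ this is $1/(1\cdot 1)\equiv 1$, and when $k=0$ it is $\beta_2/(\beta_3\beta_2)=1/\beta_3\equiv 1$ as well, so in all cases $\beta_{3k+4}\equiv 1$, giving $\beta_{3k+5}=u^2-v-\beta_{3k+4}\equiv 3-1-1 = 1$. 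Then $\beta_{3k+6}=v-\alpha_{3k+5}\alpha_{3k+6}$ together with $\alpha_{3k+6}=u-\alpha_{3k+5}$ forces $\alpha_{3k+5}$ to satisfy a quadratic; one must show the value of $\alpha_{3k+5}$ delivered by the formula $\alpha_{3k+5}=u-(\alpha_{k+2}+uv-\alpha_{3k+2}\beta_{3k+4})/\beta_{3k+5}\equiv u-\alpha_{k+2}-u+\alpha_{3k+2}= \alpha_{3k+2}-\alpha_{k+2}$ is such that $\beta_{3k+6}\equiv 1$, i.e. that $\alpha_{3k+5}\alpha_{3k+6}\equiv v-1\equiv 0$, equivalently that at least one of $\alpha_{3k+5}, \alpha_{3k+6}$ vanishes mod $p$. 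This is exactly where the finer "$\alpha_{9k+2}\equiv u$, $\alpha_{9k+5}\equiv\alpha_{3k+3}$, $\alpha_{9k+8}\equiv 0$" pattern is needed: splitting $3k+5$ according to the residue of $k$ mod $3$, one uses that the relevant $\alpha_{3k+2}$ and $\alpha_{k+2}$ have prescribed values, and checks case by case that $\alpha_{3k+5}$ is forced to be $0$, or $u$, or equal to $\alpha_{3\lfloor\cdot\rfloor+3}$, and correspondingly $\alpha_{3k+6}=u-\alpha_{3k+5}$ fills the complementary slot. Because one of the two is always $\equiv 0$, $\beta_{3k+6}\equiv v\equiv 1$, and $\alpha_{3k+4}\equiv -u$ completes the "$\alpha_{3k+1}\equiv -u$" pattern at the next block.

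The main obstacle is the bookkeeping in this last case analysis: the recurrence for $\alpha_{3k+5}$ refers back to index $k+2$, so the three residue classes of $k\bmod 3$ interact with the three sub-patterns indexed by $9k+2, 9k+5, 9k+8$, and one has to be careful that the indices $k+2$ and $3k+2$ appearing on the right-hand side are themselves of the form covered by the inductive hypothesis (they are, since $k+2< 3k+4$ for $k\ge 0$, with the $k=0$ boundary handled separately as above). I would organise this as a short table: for each $j\in\{0,1,2\}$ write $k=3m+j$, identify which of $\alpha_{9m+\cdot}$ the quantities $\alpha_{3k+5}=\alpha_{9m+3j+5}$ and $\alpha_{3k+6}$ correspond to, read off $\alpha_{3k+2}$ and $\alpha_{k+2}$ from the hypothesis, and verify the claimed value. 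Once this table checks out, all $\beta_i\equiv 1$ or $2$ modulo $p$, hence none is $\equiv 0\pmod p$, so in particular $\beta_i\neq 0$ in $\QQ$ for every $i$, and Theorem~B2 applies to give that every partial quotient of $g_{u,v}(z)$ is linear.
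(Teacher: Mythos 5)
Your plan is correct and is essentially the paper's proof: a strong induction on the recurrences \eqref{init_d3}--\eqref{recur_d3} reduced mod $p$, hinging on the observation that each pair $(\alpha_{3m+2},\alpha_{3m+3})$ is $(0,u)$ or $(u,0)$ so that $\beta_{3m+3}\equiv v\equiv 1$; your organisation into blocks of three with a case split on $k\bmod 3$ is just a repackaging of the paper's blocks of nine, and your reduced formula $\alpha_{3k+5}\equiv\alpha_{3k+2}-\alpha_{k+2}$ does yield the three claimed sub-patterns in each residue class.
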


\begin{proof}
To shorten the notation, in this proof we omit the $\pmod p$ as it
is implied in every congruence. We use the formulae~\eqref{init_d3}
and~\eqref{recur_d3} to compute the first 9 values of $\alpha_i$ and
$\beta_i$:
$$\alpha_1 \equiv -u,\; \alpha_2 \equiv u,\; \alpha_3 \equiv 0,\; \alpha_4 \equiv -u,\; \alpha_5 \equiv 0,$$
$$\alpha_6 \equiv u,\; \alpha_7 \equiv -u,\; \alpha_8 \equiv 0,\
\alpha_9 \equiv u,$$
$$\beta_1 \equiv 1,\; \beta_2 \equiv 2,\; \beta_i \equiv 1 \text{ for } 3\leq i\leq 9.$$
Now we prove by induction that for $k \geq 1$:
$$\alpha_{9k+1} \equiv \alpha_{9k+4} \equiv \alpha_{9k+7} \equiv-u,$$
$$\alpha_{9k+2} \equiv u, \alpha_{9k+3} \equiv 0, \alpha_{9k+5} \equiv \alpha_{3k+3}, \alpha_{9k+6} \equiv \alpha_{3k+2}, \alpha_{9k+8} \equiv 0, \alpha_{9k+9} \equiv u$$
$$\beta_{i} \equiv 1, \quad 9k+1 \leq i \leq 9k+9$$
Which will give the formula we desire. Note that this is the same as
equations (18) - (20), as we have just given explicit formulas for
the terms defined by $\alpha_{3k+3} = u - \alpha_{3k+2}$.

Suppose that the formulas hold for all $0 \leq k \leq n$. Also note
that this implies that up to these values, all every pair
$\alpha_{3m+2},\alpha_{3m+3}$ is either $(0,u)$ or $(u,0)$ modulo
$p$. Now we prove them for $k = n+1$.

First, it is obvious that $\alpha_{9(n+1)+1} \equiv
\alpha_{9(n+1)+4} \equiv \alpha_{9(n+1)+7} \equiv-u$ as they are all
of the form $\alpha_{3k+1}$.

Second, by~\eqref{recur_d3} we have $\beta_{9(n+1)+1} \equiv
\frac{\beta_{3(n+1)+1}}{\beta_{9(n+1)}\beta_{9(n+1)-1}} \equiv
\frac{1}{1\cdot 1} \equiv 1$ by the induction hypothesis. Then
$\beta_{9(n+1) + 2} \equiv u^2 - 1 - \beta_{9(n+1)+1} \equiv 3 - 1 -
1 \equiv 1$.

Third, we compute:
\begin{align*}
    \alpha_{9(n+1)+2} &\equiv u - \frac{\alpha_{3(n+1)+1} + u - \alpha_{9(n+1)-1}\beta_{9(n+1)+1}}{\beta_{9(n+1)+2}} \\
    &\equiv u - (\alpha_{3n+4} + u - \alpha_{9n+8}) \\
    &\equiv u - (-u + u - 0) \equiv u
\end{align*}
This then implies $\alpha_{9(n+1)+3} \equiv u - \alpha_{9(n+1)+2} \equiv 0$. Thus, $\beta_{9(n+1)+3} \equiv 1 - \alpha_{9(n+1)+2}\alpha_{9(n+1)+3} \equiv 1$.

Fourth, we continue in the same way to compute $\beta_{9(n+1)+4}
\equiv \frac{\beta_{3(n+1)+2}}{\beta_{9(n+1)+3}\beta_{9(n+1)+2}}
\equiv \frac{1}{1\cdot 1} \equiv 1$ and $\beta_{9(n+1) + 5} \equiv
u^2 - 1 - \beta_{9(n+1)+4} \equiv 1$.

Now this implies:
\begin{align*}
    \alpha_{9(n+1)+5} &\equiv u - \frac{\alpha_{3(n+1)+2} + u - \alpha_{9(n+1)+2}\beta_{9(n+1)+4}}{\beta_{9(n+1)+5}} \\
    &\equiv u - (\alpha_{3(n+1)+2} + u - \alpha_{9(n+1)+2}) \\
    &\equiv u - (\alpha_{3(n+1)+2} + u - u) \\
    &\equiv u - \alpha_{3(n+1)+2} \equiv \alpha_{3(n+1)+3}.
\end{align*}
This then implies $\alpha_{9(n+1)+6} \equiv u - \alpha_{3(n+1)+3}
\equiv \alpha_{3(n+1)+2}$ and $\beta_{9(n+1)+6} \equiv 1 -
\alpha_{9(n+1)+2}\alpha_{9(n+1)+3} \equiv 1 -
\alpha_{3(n+1)+2}\alpha_{3(n+1)+3} \equiv 1$, as by the induction
hypothesis one of these are $0$ and the other is $u$.

Finally we have $\beta_{9(n+1)+7} \equiv
\frac{\beta_{3(n+1)+3}}{\beta_{9(n+1)+6}\beta_{9(n+1)+5}} \equiv
\frac{1}{1\cdot 1} \equiv 1$ and $\beta_{9(n+1) + 8} \equiv u^2 - 1
- \beta_{9(n+1)+7} \equiv 1$. This implies:
\begin{align*}
    \alpha_{9(n+1)+8} &\equiv u - \frac{\alpha_{3(n+1)+3} + u - \alpha_{9(n+1)+5}\beta_{9(n+1)+7}}{\beta_{9(n+1)+8}} \\
    &\equiv u - (\alpha_{3(n+1)+3} + u - \alpha_{9(n+1)+5}) \\
    &\equiv u - (\alpha_{3(n+1)+3} + u - \alpha_{3(n+1)+3}) \equiv 0.
\end{align*}
This then implies $\alpha_{9(n+1)+9} \equiv u - \alpha_{9(n+1)+8}
\equiv u$ and $\beta_{9(n+1)+9} \equiv 1 -
\alpha_{9(n+1)+8}\alpha_{9(n+1)+9} \equiv 1$.

Thus the formula also holds for $k = n+1$, the proof by inductions
completes.
\end{proof}

For the other cases we provide similar lemmata which can be proven in the
same way by induction. We leave their proof to the interested reader.

\begin{lemma}
If $u^2 \equiv -3 \pmod p$ and $v \equiv -1 \pmod p$ for odd prime $p$, then
the sequences $(\alpha_i)_{i\in\mathbb{N}}$ and $(\beta_i)_{i\in\mathbb{N}}$
are given by the formula for all $k \geq 0$:
\begin{gather*}
    \alpha_{3k+1} \equiv -u,\; \alpha_{3k+2} + \alpha_{3k+3} \equiv u \pmod p;\\
    \alpha_{9k+2} \equiv 0,\; \alpha_{9k+5} \equiv \alpha_{3k+2},\; \alpha_{9k+8} \equiv u \pmod p;\\
    \beta_1 \equiv 1,\; \beta_2 \equiv -2,\; \beta_{k+3} \equiv -1 \pmod p.
\end{gather*}

\end{lemma}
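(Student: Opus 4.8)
The plan is to follow verbatim the strategy of the lemma for case~\eqref{case1}: reduce~\eqref{init_d3} and~\eqref{recur_d3} modulo $p$ and verify the claimed congruences by a strong induction on blocks of nine consecutive indices. I would first record that the reduction mod $p$ is legitimate: under the hypotheses $v-u^2\equiv-1-(-3)\equiv 2\pmod p$, a unit since $p$ is odd, so the three fractions in~\eqref{init_d3} make sense, and once the inductive claim $\beta_1\equiv 1$, $\beta_2\equiv-2$, $\beta_i\equiv-1$ for $i\ge 3$ is available, every denominator in~\eqref{recur_d3} is a unit mod $p$, so the whole sequence reduces. For the base case I would evaluate~\eqref{init_d3} and~\eqref{recur_d3} with $k=0,1$ to obtain $\alpha_1\equiv-u$, $\alpha_2\equiv 0$, $\alpha_3\equiv u$, $\alpha_4\equiv-u$, $\alpha_5\equiv 0$, $\alpha_6\equiv u$, $\alpha_7\equiv-u$, $\alpha_8\equiv u$, $\alpha_9\equiv 0$, and $\beta_1\equiv 1$, $\beta_2\equiv-2$, $\beta_i\equiv-1$ for $3\le i\le 9$; in particular each of $(\alpha_2,\alpha_3),(\alpha_5,\alpha_6),(\alpha_8,\alpha_9)$ is $(0,u)$ or $(u,0)$ mod $p$. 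As the inductive statement for block $k$ it is convenient to carry the full list $\alpha_{9k+1}\equiv\alpha_{9k+4}\equiv\alpha_{9k+7}\equiv-u$, $\alpha_{9k+2}\equiv0$, $\alpha_{9k+3}\equiv u$, $\alpha_{9k+5}\equiv\alpha_{3k+2}$, $\alpha_{9k+6}\equiv\alpha_{3k+3}$, $\alpha_{9k+8}\equiv u$, $\alpha_{9k+9}\equiv0$, $\beta_{9k+j}\equiv-1$ for $1\le j\le 9$, together with the auxiliary dichotomy $(\alpha_{3m+2},\alpha_{3m+3})\in\{(0,u),(u,0)\}\pmod p$ for every $m$ handled so far; note this forces both $\alpha_{3m+2}+\alpha_{3m+3}\equiv u$ and $\alpha_{3m+2}\alpha_{3m+3}\equiv 0$.

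Granting this for blocks $0,\dots,n$, I would establish it for block $n+1$ by running~\eqref{recur_d3} through the indices $K+1,\dots,K+9$ in order, where $K=9(n+1)$. The three $\alpha$'s of shape $\alpha_{3k+1}$ are $\equiv-u$ at once. Each $\beta$ of shape $\beta_{3k+4}$ equals an \emph{one-third-back} value $\beta_{3n+4},\beta_{3n+5}$, or $\beta_{3n+6}$ (all $\equiv-1$, since $3n+4,3n+5,3n+6\in[3,9n+9]$) divided by a product of two $\beta$'s that are $\equiv-1$ (either from the previous block or computed a step earlier in the current one), so $\beta_{3k+4}\equiv-1$; each $\beta$ of shape $u^2-v-\beta_{\mathrm{prev}}$ is $\equiv-3+1+1\equiv-1$; and each $\beta$ of shape $v-\alpha_{3m+2}\alpha_{3m+3}$ is $\equiv v\equiv-1$ because one of the factors vanishes by the dichotomy. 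For the $\alpha$'s of shape $\alpha_{3k+5}$, feeding $uv\equiv-u$ and $\alpha_{K-1}=\alpha_{9n+8}\equiv u$ into the formulae gives, in order: the numerator of $\alpha_{K+2}$ is $\alpha_{3n+4}+uv-\alpha_{K-1}\beta_{K+1}\equiv(-u)+(-u)-(u)(-1)\equiv-u$, so $\alpha_{K+2}\equiv u-(-u)/(-1)\equiv0$; then the numerator of $\alpha_{K+5}$ collapses, now that $\alpha_{K+2}\equiv0$, to $\alpha_{3n+5}-u$, so $\alpha_{K+5}\equiv\alpha_{3n+5}=\alpha_{3(n+1)+2}$; then the numerator of $\alpha_{K+8}$ is $\alpha_{3n+6}+uv-\alpha_{K+5}\beta_{K+7}\equiv\alpha_{3n+6}-u+\alpha_{3n+5}\equiv0$ by the pair-sum relation at $m=n+1$, so $\alpha_{K+8}\equiv u$. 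The three $\alpha$'s of shape $\alpha_{3k+6}$ are $u$ minus the ones just found, namely $u,\ \alpha_{3n+6}=\alpha_{3(n+1)+3},\ 0$. Finally the three new pairs $(0,u)$, $(\alpha_{3n+5},\alpha_{3n+6})$, $(u,0)$ each lie in $\{(0,u),(u,0)\}$, so the dichotomy propagates and the induction closes.

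I do not expect a genuine obstacle here; the authors themselves indicate that the argument runs in the same way as for case~\eqref{case1}. The two points that need attention are purely organisational. First, one must keep the index translations in~\eqref{recur_d3} straight, so that the \emph{one-third-back} references $\beta_{k+2},\alpha_{k+2}$ are correctly identified (for instance $\beta_{9(n+1)+1}$ comes from $k=3n+2$ and so equals $\beta_{3n+4}/(\beta_{9(n+1)}\beta_{9(n+1)-1})$) and check that they all point into $[1,9n+9]$, the range covered by the induction hypothesis. Second, one must carry the dichotomy $(\alpha_{3m+2},\alpha_{3m+3})\in\{(0,u),(u,0)\}$ through the induction, as this is exactly the mechanism turning every $\beta$ of the form $v-\alpha_{3m+2}\alpha_{3m+3}$ into $v\equiv-1$, hence keeping the whole $\beta$-sequence away from $0$ modulo $p$. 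Once the lemma is proved, $\beta_i\not\equiv0\pmod p$, so in particular $\beta_i\ne0$ and all algebraic operations in~\eqref{init_d3} and~\eqref{recur_d3} are valid over $\QQ$; Theorem~B2 then applies and gives case~\eqref{case2} of Theorem~\ref{th1}.
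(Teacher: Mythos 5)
Your proposal is correct and follows exactly the route the paper intends: the paper omits the proof of this lemma, stating it goes "in the same way by induction" as the detailed proof given for case~\eqref{case1}, and your block-of-nine induction (with the auxiliary dichotomy $(\alpha_{3m+2},\alpha_{3m+3})\in\{(0,u),(u,0)\}$ carried along) is precisely that argument. I verified your base-case values ($\alpha_2\equiv 0$, $\alpha_3\equiv u$, $\beta_3\equiv -4/4\equiv -1$, etc.) and the three key numerator computations in the inductive step, and they all check out.
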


\begin{lemma}\label{lem3}
If $u \equiv \varphi \pmod p$ and $v \equiv 0 \pmod p$ where $\varphi\in\ZZ$
satisfies $\varphi^2 + \varphi + 1 \equiv 0 \pmod p$, then the sequences
$(\alpha_i)_{i\in\mathbb{N}}$ and $(\beta_i)_{i\in\mathbb{N}}$ are given by
the formula for all $k\ge 0$:
\begin{gather*}
    \alpha_{3k+1} \equiv -\varphi,\; \alpha_{3k+2} + \alpha_{3k+3} \equiv \varphi \pmod p;\\
    \alpha_{9k+2} \equiv -1,\; \alpha_{9k+5} \equiv \alpha_{3k+2},\; \alpha_{9k+8} \equiv -\varphi^2 \pmod p;\\
    \beta_1 \equiv 1,\; \beta_2 \equiv \varphi^2,\; \beta_{3k+3} \equiv -\varphi^2,\; \beta_{3k+4} + \beta_{3k+5} \equiv \varphi^2 \pmod p;\\
    \beta_{9k+1} \equiv \beta_{3k+1},\; \beta_{9k+4}\equiv-\varphi,\; \beta_{9k+7} \equiv -1 \pmod p.
\end{gather*}
\end{lemma}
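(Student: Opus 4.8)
The plan is to prove Lemma~\ref{lem3} by strong induction on the block index $k$, exactly mirroring the strategy used in the proof of the first lemma (the case $u^2\equiv 3$, $v\equiv 1$). As there, I would first suppress the ``$\pmod p$'' notation throughout, and record the algebraic consequences of the hypotheses $u\equiv\varphi$, $v\equiv 0$, $\varphi^2+\varphi+1\equiv 0$ that get used repeatedly: namely $\varphi^2\equiv-\varphi-1$, $\varphi^3\equiv 1$, and $u^2-v\equiv\varphi^2$, $v-u^2\equiv-\varphi^2$ (note $\varphi^2$ is a unit since $\varphi\cdot\varphi^2\equiv 1$, so all the divisions in~\eqref{init_d3} and~\eqref{recur_d3} are legitimate mod $p$). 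The proof splits into a base case and an inductive step.

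For the base case I would compute $\alpha_1,\dots,\alpha_9$ and $\beta_1,\dots,\beta_9$ directly from~\eqref{init_d3} and~\eqref{recur_d3} with $u=\varphi$, $v=0$, checking that they match the claimed closed forms (in particular $\alpha_1\equiv-\varphi$, $\alpha_2\equiv-1$, $\alpha_3\equiv\varphi+1\equiv-\varphi^2$, $\beta_1\equiv 1$, $\beta_2\equiv\varphi^2$, $\beta_3\equiv-\varphi^2$, and then $\beta_4\equiv\beta_2/(\beta_3\beta_2)\equiv 1/\beta_3\equiv-\varphi$ using $\varphi^3\equiv1$, $\beta_5\equiv u^2-v-\beta_4\equiv\varphi^2+\varphi$, $\beta_6\equiv v-\alpha_5\alpha_6$, etc., continuing through $\beta_9$). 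This is routine but must be done to seed the induction and to verify that none of $\beta_1,\dots,\beta_9$ vanishes.

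For the inductive step I would assume the formulas hold for all block indices $0\le k\le n$ and establish them for $k=n+1$, following the same seven-stage pattern as the model proof: (i) the three entries $\alpha_{9(n+1)+1},\alpha_{9(n+1)+4},\alpha_{9(n+1)+7}$ are $-\varphi$ automatically since they have the form $\alpha_{3j+1}$; (ii) compute $\beta_{9(n+1)+1}\equiv\beta_{3(n+1)+1}/(\beta_{9(n+1)}\beta_{9(n+1)-1})$ and use the induction hypothesis on the denominator terms together with the claimed values of $\beta_{3j+1}$; (iii) then $\beta_{9(n+1)+2}\equiv u^2-v-\beta_{9(n+1)+1}$ and $\alpha_{9(n+1)+2}$ from the $\alpha_{3k+5}$-type recursion, which should collapse to $-1$; (iv)--(vi) repeat for $\beta_{9(n+1)+4},\beta_{9(n+1)+5}$ and $\alpha_{9(n+1)+5}\equiv\alpha_{3(n+1)+2}$, $\alpha_{9(n+1)+6}\equiv\alpha_{3(n+1)+2}$-complement; (vii) finish with $\beta_{9(n+1)+7},\beta_{9(n+1)+8}$ and $\alpha_{9(n+1)+8}\equiv-\varphi^2$, $\alpha_{9(n+1)+9}\equiv\varphi-\alpha_{9(n+1)+8}$, $\beta_{9(n+1)+9}\equiv v-\alpha_{9(n+1)+8}\alpha_{9(n+1)+9}$. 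Throughout, each $\beta$ is seen to be one of $1$, $-\varphi$, $-\varphi^2$, $\varphi^2$, or a sum $\beta_{3j+4}+\beta_{3j+5}\equiv\varphi^2$ splitting consistently — none of which is $\equiv 0$ since $\varphi$ is a unit.

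The main obstacle — and the only part that differs in flavour from the first lemma — is bookkeeping the $\beta_i$ pattern, which is genuinely richer here: in the model lemma almost all $\beta_i\equiv 1$, whereas now one must carry three interlocking claims, $\beta_{3k+3}\equiv-\varphi^2$, $\beta_{3k+4}+\beta_{3k+5}\equiv\varphi^2$, and the residue-$1$ subsequence $\beta_{9k+1}\equiv\beta_{3k+1}$, $\beta_{9k+4}\equiv-\varphi$, $\beta_{9k+7}\equiv-1$. Verifying that the recursion $\beta_{3k+4}\equiv\beta_{k+2}/(\beta_{3k+3}\beta_{3k+2})$ reproduces exactly these values requires splitting on $k\bmod 3$ (to know which of the three sub-patterns $\beta_{k+2}$ and $\beta_{3k+2},\beta_{3k+3}$ belong to) and using $\varphi^3\equiv 1$ to simplify; the pair-sum claim for $(\beta_{3k+4},\beta_{3k+5})$ then follows from $\beta_{3k+5}\equiv u^2-v-\beta_{3k+4}\equiv\varphi^2-\beta_{3k+4}$. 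Once this case analysis is organized, the $\alpha$-recursions telescope just as in the displayed computations of the first lemma, and the conclusion $\beta_i\not\equiv 0$ for all $i$, hence (by Theorem~B2) linearity of every partial quotient, is immediate.
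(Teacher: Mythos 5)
Your proposal is correct and follows exactly the approach the paper intends: the paper itself omits the proof of Lemma~\ref{lem3}, stating only that it goes "in the same way by induction" as the first lemma, and your outline is precisely that induction, with the base-case values ($\alpha_2\equiv-1$, $\alpha_3\equiv-\varphi^2$, $\beta_3\equiv-\varphi^2$, $\beta_4\equiv-\varphi$, etc.) computed correctly and the key structural points (the use of $\varphi^3\equiv1$, the pair-sum identity $\beta_{3k+4}+\beta_{3k+5}\equiv\varphi^2$, and the residue-class split that produces the three subsequences $\beta_{9k+1},\beta_{9k+4},\beta_{9k+7}$) correctly identified.
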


One can easily derive from Lemma~\ref{lem3} that for $i\ge 3$ the value of
$\beta_i$ is congruent to either $-1, -\varphi$ or $-\varphi^2$ modulo $p$,
hence it never equals zero.

\begin{lemma}\label{lem4}
If $u \equiv \varphi \pmod p$ and $v \equiv -1\pmod p$ where $\varphi\in\ZZ$
satisfies $\varphi^4 + 4\varphi^2 + 1 \equiv 0 \pmod p$ and $p$ is an odd
prime, then the sequences $(\alpha_i)_{i\in\mathbb{N}}$ and
$(\beta_i)_{i\in\mathbb{N}}$ are given by the formula for $k\ge 0$:
\begin{gather*}
    \alpha_{3k+1} \equiv -\varphi,\; \alpha_{3k+2} + \alpha_{3k+3} \equiv \varphi \pmod p;\\
    \alpha_{9k+2} \equiv -\varphi^{-1},\; \alpha_{9k+5} \equiv \alpha_{3k+3},\; \alpha_{9k+8} \equiv \varphi+\varphi^{-1} \pmod p;\\
    \beta_1 \equiv 1,\; \beta_2 \equiv \varphi^2 + 1,\; \beta_{3k+3} \equiv \varphi^{-2},\; \beta_{3k+4} + \beta_{3k+5} \equiv \varphi^2 + 1 \pmod p;\\
    \beta_{9k+1} \equiv \beta_{3k+1},\; \beta_{9k+4}\equiv\varphi^2,\; \beta_{9k+7} \equiv 1 \pmod p.
\end{gather*}
\end{lemma}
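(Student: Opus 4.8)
The plan is to prove Lemma~\ref{lem4} by induction on $k$, exactly mirroring the argument given in detail for case~\eqref{case1}. First I would record the standing hypotheses $u^2\equiv\varphi^2$, $v\equiv-1$, and the defining relation $\varphi^4+4\varphi^2+1\equiv 0$, which I would rewrite in the more usable forms $\varphi^2+4+\varphi^{-2}\equiv 0$ (dividing by $\varphi^2$, noting $\varphi\not\equiv 0$ since $1\not\equiv 0$) and $\varphi^2+1\equiv-(\varphi^2+3)\equiv -(2+\varphi^{-2}+\varphi^2-\varphi^{-2})$; the key simplifications I expect to need repeatedly are $u^2-v-1\equiv\varphi^2+1-1\equiv\varphi^2$ wait---more precisely $u^2-v\equiv\varphi^2+1$, and the identity $\varphi^2\cdot\varphi^{-2}\equiv 1$ together with $\varphi^2+\varphi^{-2}\equiv-4$. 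The base case is a direct computation: using~\eqref{init_d3} with $v\equiv-1$ one gets $\alpha_1\equiv-\varphi$, $\beta_1\equiv 1$, $\beta_2\equiv u^2-v\equiv\varphi^2+1$, and then $\alpha_2,\alpha_3,\beta_3$ and the next six terms follow from~\eqref{recur_d3} with $k=0$; I would check these match the claimed values for $k=0$ (i.e.\ $\alpha_2\equiv-\varphi^{-1}$, $\alpha_3\equiv\varphi+\varphi^{-1}$, $\beta_3\equiv\varphi^{-2}$, etc.), which is where the quartic relation for $\varphi$ gets used for the first time.

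For the inductive step I would assume the formulas hold for all indices up to $9n+9$ and establish them for $9(n+1)+1$ through $9(n+1)+9$, processing the three blocks $\{9k'+1,9k'+2,9k'+3\}$, $\{9k'+4,9k'+5,9k'+6\}$, $\{9k'+7,9k'+8,9k'+9\}$ (with $k'=n+1$) in turn, just as in the proof of the first lemma. In each block the pattern is the same: first compute $\beta_{3k'+\cdot}$ from the ratio formula $\beta_{3k+4}=\beta_{k+2}/(\beta_{3k+3}\beta_{3k+2})$ using the induction hypothesis on the smaller index, then $\beta_{3k+5}=u^2-v-\beta_{3k+4}$, then $\alpha_{3k+5}$ from its formula, then $\alpha_{3k+6}=u-\alpha_{3k+5}$ and $\beta_{3k+6}=v-\alpha_{3k+5}\alpha_{3k+6}$. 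The crucial structural facts that make this close up are: (i) every $\alpha_{3m+1}\equiv-\varphi$ and every pair $(\alpha_{3m+2},\alpha_{3m+3})$ reappears in a periodic-within-blocks way so that $\alpha_{9k+5}\equiv\alpha_{3k+3}$ and $\alpha_{9k+6}\equiv\alpha_{3k+2}$ feed back consistently; (ii) the $\beta$'s in positions $\equiv 1,4,7\pmod 9$ are $\beta_{3k+1}$ (itself recursively one of $1,\varphi^2,1$ by an inner induction on the $3$-adic structure of the index), $\varphi^2$, $1$ respectively, and positions $\equiv 0,2\pmod 3$ among the later terms are $\varphi^{-2}$ and complementary to it in $\varphi^2+1$. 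I would verify that in the $\alpha_{3k+5}$ computation the terms $\alpha_{k+2}$, $uv$, and $\alpha_{3k+2}\beta_{3k+4}$ combine, after dividing by $\beta_{3k+5}$, to give exactly the claimed value---this is the one place where the specific relation $\varphi^4+4\varphi^2+1\equiv 0$ is essential rather than cosmetic.

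The main obstacle I anticipate is bookkeeping the nested recursion in the $\beta$-indices: the value $\beta_{9k+1}\equiv\beta_{3k+1}$ means one must know $\beta_{3k+1}$, which is again $\beta_{k+1}$-dependent when $3\mid k$, and so on down to the base values $\beta_1,\beta_2,\beta_3$; I would handle this by a separate small lemma (or remark) showing that for all $m\ge 1$, $\beta_{3m+1}$ together with the block pattern forces $\beta_i\in\{1,\varphi^2,\varphi^{-2},\varphi^2+1,-\varphi^2,\ldots\}$---in any case a finite explicit list of units mod $p$---so that $\beta_i\not\equiv 0$, which is all Theorem~B2 requires. The second, more mechanical, obstacle is making sure the $\alpha_{3k+5}$ formula's numerator simplifies correctly in every residue class of $k$ modulo $3$ (since $\alpha_{k+2}$ takes different shapes there); I would split into the cases $k\equiv 0,1,2\pmod 3$ and in each substitute the already-known values, reducing everything to identities in $\varphi$ that are consequences of $\varphi^4+4\varphi^2+1\equiv 0$. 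Once the inductive step is closed, the conclusion that all $\beta_i\not\equiv 0\pmod p$, hence all $\beta_i\neq 0$ in $\QQ$, follows, and Theorem~B2 gives that every partial quotient of $g_{u,v}(z)$ is linear, completing the proof for case~\eqref{case4}; the case $u\equiv-\varphi$ is identical since $(-\varphi)^4+4(-\varphi)^2+1=\varphi^4+4\varphi^2+1$ and the recursion is equivariant under $u\mapsto-u$ in the appropriate sense.
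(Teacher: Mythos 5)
Your plan matches the paper's: the paper gives no written proof of Lemma~\ref{lem4}, stating only that it ``can be proven in the same way by induction'' as the detailed case~\eqref{case1}, and your block-of-nine induction mirroring that proof is precisely what is intended. The base-case values and the role of $\varphi^4+4\varphi^2+1\equiv 0$ that you identify are correct (e.g.\ $\alpha_2\equiv-\varphi^{-1}$, $\alpha_3\equiv\varphi+\varphi^{-1}$ and $\beta_3\equiv\varphi^{-2}$ all reduce to that quartic), so carrying out the three-block inductive computations as you describe closes the argument.
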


One can check that for odd prime $p$ the values $\varphi^2$ and $\varphi^2+1$
are not congruent to zero modulo $p$. One can derive from Lemma~\ref{lem4}
that for $i\ge 3$ the value of $\beta_i$ is congruent to either
$\varphi^{-2}, \varphi^2$ or 1 modulo $p$. Hence it never equals zero.

\begin{lemma}\label{lem5}
If $u \equiv \varphi \pmod p$ and $v \equiv \delta \pmod p$ where $\delta,
\varphi\in \ZZ$ satisfy $\delta^2 - \delta + 1 \equiv 0 \pmod p$, $\varphi^2
\equiv 2\delta \pmod p$ and $p$ is an odd prime, then the sequences
$(\alpha_i)_{i\in\mathbb{N}}$ and $(\beta_i)_{i\in\mathbb{N}}$ are given by
the formula for $k\ge 0$:
\begin{gather*}
    \alpha_{3k+1} \equiv -\varphi,\; \alpha_{3k+2} + \alpha_{3k+3} \equiv \varphi \pmod p;\\
    \alpha_{9k+2} \equiv \frac{\varphi}{\delta},\; \alpha_{9k+5} \equiv \alpha_{3k+3},\; \alpha_{9k+8} \equiv \varphi\delta \pmod p;\\
    \beta_1 \equiv 1,\; \beta_2 \equiv \delta,\; \beta_{3k+3} \equiv -\delta,\; \beta_{3k+4} + \beta_{3k+5} \equiv \delta \pmod p\\
    \beta_{9k+1} \equiv \beta_{3k+1},\; \beta_{9k+4}\equiv-\frac{1}{\delta},\; \beta_{9k+7} \equiv 1 \pmod p.
\end{gather*}
\end{lemma}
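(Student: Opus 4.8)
The plan is to run exactly the argument already given in full for case~\eqref{case1}, only with the residue pattern of Lemma~\ref{lem5} in place of the one there. Throughout I suppress the ``$\pmod p$''. The first step is to record the arithmetic consequences of the two hypotheses. From $\delta^2-\delta+1\equiv 0$ one gets $\delta^2\equiv\delta-1$ and $\delta^3\equiv-1$; in particular $\delta$ is a unit with $\delta^{-1}\equiv 1-\delta$, $\delta^{-2}\equiv-\delta$, and $\delta-1\equiv\delta^2\equiv-\delta^{-1}$. From $\varphi^2\equiv 2\delta$ one gets $\varphi^4\equiv 4\delta-4$ and $\varphi\not\equiv 0$ (else $\delta\equiv 0$, which is impossible). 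These few identities are precisely what collapse the otherwise unwieldy expressions in \eqref{init_d3}--\eqref{recur_d3}.

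Second, I would compute the first nine terms directly from \eqref{init_d3} and \eqref{recur_d3}. Here $v-u^2\equiv-\delta$ and $\beta_2=u^2-v\equiv\delta$; then $2v-1-u^2\equiv-1$ gives $\alpha_2\equiv\varphi/\delta$, $v-1\equiv\delta^2$ gives $\alpha_3\equiv\varphi\delta$, and the numerator of $\beta_3$ reduces, via $\delta^3\equiv-1$, to $1$, so $\beta_3\equiv\delta^{-2}\equiv-\delta$. Running \eqref{recur_d3} with $k=0$ and $k=1$ then yields $\alpha_4,\dots,\alpha_9$ and $\beta_4,\dots,\beta_9$, and one checks these match the claimed formulas at $k=0$. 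A useful auxiliary observation at this stage is that every pair $(\alpha_{3m+2},\alpha_{3m+3})$ produced so far equals $(\varphi/\delta,\varphi\delta)$ or $(\varphi\delta,\varphi/\delta)$, hence in all cases $\alpha_{3m+2}\alpha_{3m+3}\equiv\varphi^2\equiv 2\delta$.

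Third comes the induction, organized exactly as in the model proof: by strong induction, with the $k=0$ case being the explicit first nine terms, assume the claimed formulas for $\alpha_{9k+j},\beta_{9k+j}$ ($1\le j\le 9$) for all $0\le k\le n$, together with the auxiliary facts $\beta_{3m+3}\equiv-\delta$, $\beta_{3m+4}+\beta_{3m+5}\equiv\delta$ and $\alpha_{3m+2}\alpha_{3m+3}\equiv 2\delta$ in range, and establish them at $k=n+1$ in three blocks of three indices (three consecutive applications of \eqref{recur_d3}). In each block one first computes the two new $\beta$'s from $\beta_{3k+4}=\beta_{k+2}/(\beta_{3k+3}\beta_{3k+2})$ and $\beta_{3k+5}=u^2-v-\beta_{3k+4}$; given $\beta_{3k+3}\equiv-\delta$ and the lower-index values, these collapse (using $\delta^3\equiv-1$) to the stated residues, e.g. $\beta_{9(n+1)+1}\equiv\beta_{3(n+1)+1}$. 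Next one computes the $\alpha$ of index $\equiv 2\pmod 3$ from the formula for $\alpha_{3k+5}$, substituting $\alpha_{3(n+1)+1}\equiv-\varphi$ and the induction hypothesis for $\alpha_{3(n+1)+2},\alpha_{3(n+1)+3}$, and simplifying with $\delta^{-1}\equiv 1-\delta$ and $\varphi^2\equiv 2\delta$; the point is that this forces $\alpha_{9(n+1)+5}\equiv\alpha_{3(n+1)+3}$. Finally the companion $\alpha$ comes from $\alpha_{3k+6}=u-\alpha_{3k+5}$ and the companion $\beta$ from $\beta_{3k+6}=v-\alpha_{3k+5}\alpha_{3k+6}$, where $\alpha_{3k+5}\alpha_{3k+6}\equiv 2\delta$ forces $\beta_{3k+6}\equiv v-2\delta\equiv-\delta$. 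The three blocks are nearly identical, so I would leave the routine bookkeeping to the reader, just as the paper does for the earlier cases.

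Finally, with the pattern in hand one observes that for every $i\ge 1$ the residue $\beta_i$ lies in $\{1,\,\delta,\,-\delta,\,\delta^2\}$, each of which is a unit modulo $p$ (since $\delta(1-\delta)\equiv 1$), hence nonzero; therefore $\beta_i\ne 0$ in $\QQ$ for all $i$, and every division occurring in \eqref{init_d3}--\eqref{recur_d3} is by a quantity nonzero mod $p$, so all the algebraic operations are legitimate. Theorem~B2 then gives that all partial quotients of $g_{u,v}(z)$ are linear. The only real obstacle is the bookkeeping: keeping the nine residue classes consistent through the inductive step and confirming that each simplification closes up using nothing beyond $\delta^2\equiv\delta-1$, $\delta^3\equiv-1$ and $\varphi^2\equiv 2\delta$. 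There is no conceptual novelty beyond case~\eqref{case1}.
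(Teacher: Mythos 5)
Your proposal is correct and follows exactly the strategy the paper uses (and explicitly defers to the reader for this lemma): compute the first nine terms from~\eqref{init_d3}, then induct in blocks of three via~\eqref{recur_d3}, using the auxiliary facts that each pair $(\alpha_{3m+2},\alpha_{3m+3})$ is a permutation of $(\varphi/\delta,\varphi\delta)$ with product $\varphi^2\equiv 2\delta$ and that $\delta^3\equiv-1$, $\delta^{-1}\equiv 1-\delta$ close every simplification. The spot checks confirm the key reductions (e.g.\ $\beta_{9k}\beta_{9k-1}\equiv-\delta\cdot\delta^2\equiv1$ giving $\beta_{9k+1}\equiv\beta_{3k+1}$, and $\alpha_{9k+2}\beta_{9k+4}\equiv\varphi\delta$ giving $\alpha_{9k+5}\equiv\alpha_{3k+3}$), so nothing further is needed.
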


Lemma~\ref{lem5} implies that for all $i\ge 3$ the value of $\beta_i$ is
congruent to either $-\delta, -\delta^{-1}$ or 1 modulo $p$, hence it never
equals zero.

\begin{lemma}\label{lem6}
If $u \equiv 0 \pmod p$ and $,v \equiv \delta \pmod p$ where $\delta\in \ZZ$
satisfies $\delta^2 + \delta + 1 \equiv 0 \pmod p$, then the sequences
$(\alpha_i)_{i\in\mathbb{N}}$ and $(\beta_i)_{i\in\mathbb{N}}$ are given by
the formula for $k\ge 0$:
\begin{gather*}
    \alpha_{k} \equiv 0 \pmod p;\\
    \beta_1 \equiv 1,\; \beta_2 \equiv -\delta,\; \beta_{3k+3} \equiv \delta,\; \beta_{3k+4} + \beta_{3k+5} \equiv -\delta \pmod p;\\
    \beta_{9k+1} \equiv \beta_{3k+1},\; \beta_{9k+4}\equiv \delta^{-1},\; \beta_{9k+7} \equiv 1 \pmod p.
\end{gather*}

\end{lemma}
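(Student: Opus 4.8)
The plan is to follow the same scheme as the detailed proof of case~\eqref{case1}: reduce the recurrences~\eqref{init_d3} and~\eqref{recur_d3} modulo $p$ and verify the claimed pattern by induction, checking along the way that every denominator that occurs is a $p$-adic unit, so that the reduction is legitimate and, in the end, $\beta_i\neq 0$ over $\QQ$ and Theorem~B2 applies. The hypotheses do most of the work: $u\equiv 0$ kills every $\alpha$-contribution in~\eqref{recur_d3}, while $\delta^2+\delta+1\equiv 0\pmod p$ supplies the identities $\delta\not\equiv 0$, $\delta^{-1}\equiv\delta^2\equiv -\delta-1$ and $\delta+\delta^2\equiv -1$ that render the $\beta$-recursion self-similar.

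First I would reduce the initial data. Since $\delta^2+\delta+1\equiv 0$ forces $\delta\not\equiv 0$, the element $v-u^2\equiv\delta$ is invertible modulo $p$, so~\eqref{init_d3} gives $\alpha_1\equiv\alpha_2\equiv\alpha_3\equiv 0$, $\beta_1\equiv 1$, $\beta_2\equiv u^2-v\equiv -\delta$ and $\beta_3\equiv v^3(v-u^2)^{-2}\equiv v\equiv\delta$. Feeding $u\equiv 0$ into~\eqref{recur_d3}, a one-line induction yields $\alpha_i\equiv 0$ for all $i$: $\alpha_{3k+4}=-u\equiv 0$, then $\alpha_{3k+5}\equiv 0$ because its numerator $\alpha_{k+2}+uv-\alpha_{3k+2}\beta_{3k+4}$ vanishes, and $\alpha_{3k+6}=u-\alpha_{3k+5}\equiv 0$ (this needs only that $\beta_{3k+5}$ is a unit, which is checked at the end). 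With all $\alpha$'s zero, the last two lines of~\eqref{recur_d3} give directly $\beta_{3k+6}=v-\alpha_{3k+5}\alpha_{3k+6}\equiv\delta$, hence $\beta_{3k+3}\equiv\delta$ for every $k\ge 0$, and $\beta_{3k+5}=u^2-v-\beta_{3k+4}\equiv -\delta-\beta_{3k+4}$, i.e.\ $\beta_{3k+4}+\beta_{3k+5}\equiv -\delta$; re-indexing, $\beta_{3m+2}\equiv -\delta-\beta_{3m+1}$ for $m\ge 1$.

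The core is the behaviour of the subsequence $(\beta_{3k+1})$ modulo $9$, proved by strong induction on $k$, establishing at level $k$, in this order: (i) $\beta_{9k+1}\equiv\beta_{3k+1}$; (ii) $\beta_{9k+4}\equiv\delta^{-1}$; (iii) $\beta_{9k+7}\equiv 1$. For (i) with $k\ge 1$, write $9k+1=3(3k-1)+4$ and apply the first line of~\eqref{recur_d3}: $\beta_{9k+1}\equiv\beta_{3k+1}\big/\big(\beta_{9k}\,\beta_{9k-1}\big)$, where $\beta_{9k}\equiv\delta$ and $\beta_{9k-1}\equiv -\delta-\beta_{9k-2}\equiv -\delta-1\equiv\delta^{-1}$ by (iii) at level $k-1$ (note $9k-2=9(k-1)+7$); since $\delta\cdot\delta^{-1}\equiv 1$ this gives (i). For (ii), write $9k+4=3(3k)+4$, so $\beta_{9k+4}\equiv\beta_{3k+2}\big/\big(\beta_{9k+3}\,\beta_{9k+2}\big)\equiv(-\delta-\beta_{3k+1})\big/\big(\delta(-\delta-\beta_{9k+1})\big)$, which equals $\delta^{-1}$ by (i). For (iii), write $9k+7=3(3k+1)+4$, so $\beta_{9k+7}\equiv\beta_{3k+3}\big/\big(\beta_{9k+6}\,\beta_{9k+5}\big)\equiv\delta\big/\big(\delta(-\delta-\delta^{-1})\big)$, and $-\delta-\delta^{-1}\equiv -(\delta+\delta^2)\equiv 1$. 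The base case $k=0$ is immediate: $\beta_1\equiv 1$, $\beta_4\equiv\beta_2/(\beta_3\beta_2)\equiv\delta^{-1}$, $\beta_7\equiv\beta_3/(\beta_6\beta_5)\equiv\delta/(\delta\cdot 1)\equiv 1$. The value of $\beta_{3k+1}$ used in (i), (ii) is read off from strictly smaller levels: according as $k\equiv 0,1,2\pmod 3$ it equals $\beta_{3(k/3)+1}$, $\delta^{-1}$ or $1$, so inductively $\beta_{3k+1}\in\{1,\delta^{-1}\}$.

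Finally, all divisions used are by units. By the above, every $\beta_i$ with $i\ge 3$ lies in $\{\delta,\delta^{-1},1\}$ (index $\equiv 1,2\pmod 3$ give $\beta_{3k+1}\in\{1,\delta^{-1}\}$ and $\beta_{3k+2}=-\delta-\beta_{3k+1}\in\{\delta^{-1},1\}$), while $\beta_1\equiv 1$ and $\beta_2\equiv -\delta$; since $\delta\not\equiv 0$ and $\delta\not\equiv -1$ (the latter because $(-1)^2+(-1)+1=1\not\equiv 0$), none of these is $\equiv 0\pmod p$, and in particular $-\delta-\beta_{3k+1}\not\equiv 0$, $\beta_{3k+5}\not\equiv 0$ and $\delta+\delta^2\not\equiv 0$, which justifies every step. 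Hence all algebraic operations in~\eqref{init_d3}--\eqref{recur_d3} are valid over $\QQ$, the reductions above are correct, and $\beta_i\neq 0$ for all $i$; Theorem~B2 then gives that every partial quotient of $g_{u,v}(z)$ is linear. The only genuine difficulty is organisational: the recursion $\beta_{3k+4}=\beta_{k+2}/(\beta_{3k+3}\beta_{3k+2})$ jumps back to index $k+2$, so one must carefully identify the residue of each index modulo $9$ to know which of (i)--(iii) to invoke, and interleave this tracking with the unit checks.
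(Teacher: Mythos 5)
Your proof is correct and follows exactly the scheme the paper intends for this lemma (which it leaves to the reader): reduce \eqref{init_d3}--\eqref{recur_d3} modulo $p$, note that $u\equiv 0$ forces all $\alpha_i\equiv 0$, and run the block-of-nine induction on the $\beta$'s while verifying that every value lies in $\{1,\delta,\delta^{-1},-\delta\}$ and hence every denominator is a unit. The index bookkeeping ($9k+1=3(3k-1)+4$, $9k-2=9(k-1)+7$, etc.) and the identities $\delta^{-1}\equiv\delta^2\equiv-\delta-1$ are all used correctly, so nothing further is needed.
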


Under conditions of lemma~\ref{lem6}, for all $i\ge 3$ the value of $\beta_i$
is congruent to either $\delta, \delta^{-1}$ or 1 modulo $p$. Hence it never equals zero.

\begin{lemma}\label{lem7}
If $u \equiv \pm 2\delta^2 \pmod p$ and $v \equiv \delta \pmod p$ where
$\delta\in \ZZ$ satisfy $\delta^2 + \delta + 1 \equiv 0 \pmod p$ and $p > 3$
is an odd prime, then the sequences $(\alpha_i)_{i\in\mathbb{N}}$ and
$(\beta_i)_{i\in\mathbb{N}}$ are given by the formula for $k\ge 0$:
\begin{gather}
    \alpha_{3k+1} \equiv -u,\; \alpha_{3k+2} + \alpha_{3k+3} \equiv u \pmod p;\label{lem7_eq1}\\
    \alpha_{9k+2} \equiv -\frac{2\delta + 4}{3},\; \alpha_{9k+5} \equiv \frac{u + \alpha_{3k+2}}{3},\; \alpha_{9k+8} \equiv -\frac{4\delta + 2}{3} \pmod p;\\
    \beta_1 \equiv 1,\; \beta_2 \equiv 3\delta,\; \beta_{3k+4} + \beta_{3k+5} \equiv 3\delta \pmod p\\
    \beta_{9k+1} \equiv \beta_{3k+1},\; \beta_{9k+4}\equiv -\frac{3}{\delta},\; \beta_{9k+7} \equiv -3 \pmod p.\\
    \beta_{9k+3} \equiv -\frac{\delta}{3},\; \beta_{9k+6}\equiv \frac{\beta_{3k+3}}{9},\; \beta_{9k+9} \equiv -\frac{\delta}{3} \pmod p\label{lem7_eq5}.
\end{gather}
\end{lemma}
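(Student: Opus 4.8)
The plan is to prove Lemma~\ref{lem7} by strong induction on $k$, following exactly the template established in the detailed proof of the first lemma. First I would record the standing hypotheses in the convenient form $u^2 \equiv 4\delta^4 \equiv 4\delta \pmod p$ (using $\delta^3 \equiv 1$) and $\delta^2 \equiv -\delta - 1$, and note that since $p > 3$ all the denominators $3$, $9$, $\delta$ appearing in the claimed formulae are invertible modulo $p$. I would also replace $u$ by $2\delta^2$ throughout, the sign $\pm u$ being handled by the symmetry $g_{u,v} = g_{-u,v}$-type invariance already implicit in the other lemmata (or simply by checking that the recurrences~\eqref{init_d3}--\eqref{recur_d3} are equivariant under $u \mapsto -u$, $\alpha_i \mapsto -\alpha_i$).

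Next I would compute the base case: the initial values $\alpha_1,\alpha_2,\alpha_3$ and $\beta_1,\beta_2,\beta_3$ from~\eqref{init_d3}, and then march through~\eqref{recur_d3} to obtain $\alpha_i,\beta_i$ for $i$ up to $9$, verifying they match~\eqref{lem7_eq1}--\eqref{lem7_eq5} with $k=0$ and (where needed) $k=1$. The key simplifications are $v - u^2 \equiv \delta - 4\delta \equiv -3\delta$, so $\beta_2 \equiv u^2 - v \equiv 3\delta$, and the more delicate $\beta_3 \equiv (u^2 + u^4 + v^3 - 3u^2v)/(v-u^2)^2$, where $v^3 \equiv \delta^3 \equiv 1$, $u^4 \equiv 16\delta^2$, $u^2 \equiv 4\delta$, giving a numerator $\equiv 4\delta + 16\delta^2 + 1 - 12\delta^2 = 4\delta^2 + 4\delta + 1 = (2\delta+1)^2$ hmm — actually $4\delta^2 + 4\delta + 1$, and since $\delta^2 + \delta + 1 \equiv 0$ this is $4(\delta^2+\delta) + 1 \equiv -4 + 1 = -3$, so $\beta_3 \equiv -3/(9\delta^2) \equiv -1/(3\delta^2) \equiv -\delta/3$ using $\delta^3 \equiv 1$. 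That matches \eqref{lem7_eq5} at $k=0$.

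For the inductive step I would assume all formulae hold for indices up to $9n+9$ and derive them for the block $9(n+1)+1,\dots,9(n+1)+9$, processing the nine indices in the order dictated by~\eqref{recur_d3}: first the three $\alpha_{3k+1} \equiv -u$ entries (immediate), then in each of the three sub-blocks compute $\beta_{3k+4}$ from the quotient formula $\beta_{k+2}/(\beta_{3k+3}\beta_{3k+2})$, then $\beta_{3k+5} \equiv u^2 - v - \beta_{3k+4} \equiv 3\delta - \beta_{3k+4}$, then $\alpha_{3k+5}$ from its formula, then $\alpha_{3k+6} \equiv u - \alpha_{3k+5}$ and $\beta_{3k+6} \equiv v - \alpha_{3k+5}\alpha_{3k+6}$. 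The crucial inputs are the induction hypotheses $\beta_{9k+1} \equiv \beta_{3k+1}$, $\beta_{9k+4} \equiv -3/\delta$, $\beta_{9k+7} \equiv -3$, $\beta_{9k+3} \equiv \beta_{9k+9} \equiv -\delta/3$, $\beta_{9k+6} \equiv \beta_{3k+3}/9$, together with the $\alpha$ identities; one substitutes these into the recurrence at the shifted index $3(n+1)+j$ and simplifies using $\delta^2 + \delta + 1 \equiv 0$. I expect the main obstacle to be bookkeeping rather than conceptual: the $\alpha_{9k+5} \equiv (u + \alpha_{3k+2})/3$ entry and the $\beta_{9k+6} \equiv \beta_{3k+3}/9$ entry both refer to the \emph{previous} scale, so one must carefully track how $\alpha_{3(n+1)+2}$ relates to the block at level $n$ (via the parity of whether $3(n+1)+2$ is $\equiv 2,5,8 \bmod 9$), and the case analysis for $\beta_{3k+6}$ requires knowing which of $\alpha_{3k+2},\alpha_{3k+3}$ is which — here, unlike in Lemma~1, neither is forced to be $0$, so one must verify the product $\alpha_{9k+5}\alpha_{9k+6} = \alpha_{9k+5}(u - \alpha_{9k+5})$ evaluates correctly using $\alpha_{9k+5} \equiv \alpha_{3k+3}$ at the appropriate sub-block. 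Granting that this routine but lengthy verification goes through, the $\beta_i$ take values only in $\{-3, -3/\delta, -\delta/3, 3\delta, \beta_{3k+3}/9, \dots\}$ — all products of powers of $3$ and $\delta$, none divisible by $p$ since $p > 3$ and $\gcd(\delta,p)=1$ — so $\beta_i \not\equiv 0 \pmod p$ for all $i$, hence $\beta_i \neq 0$, and Theorem~B2 applies.
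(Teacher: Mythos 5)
Your plan follows essentially the same route as the paper's own proof: reduce to the $u\equiv 2\delta^2$ case, verify the base block $i\le 9$ directly from~\eqref{init_d3}--\eqref{recur_d3} (your computation $\beta_3\equiv -3/(9\delta^2)\equiv -\delta/3$ is correct and matches the paper), and then run the nine-index inductive step in the order dictated by~\eqref{recur_d3}, using $u^2\equiv 4\delta$ and $\delta^2+\delta+1\equiv 0$ to simplify. The only remark worth making is that the case analysis you anticipate for $\beta_{9k+6}$ is unnecessary: since $\alpha_{3k+2}+\alpha_{3k+3}\equiv u$ and $\beta_{3k+3}\equiv \delta-\alpha_{3k+2}\alpha_{3k+3}$, the symmetric functions alone give $\beta_{9k+6}\equiv\beta_{3k+3}/9$ without knowing which factor is which.
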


Lemma~\ref{lem7} implies that for all $i\ge 3$ the value of $\beta_i$ is
congruent to either $-\frac{\delta}{3}, -3\delta^{-1}, -3$ or
$\frac{\beta_{i/3+1}}{9}$ modulo $p$, the latter of which is inductively
never zero. Hence none of $\beta_i$ equals zero.

Since the proof of Lemma~\ref{lem7} involves the most tedious computations,
compared to other lemmata, we also outline its proof here.

\begin{proof}
Again we omit the $\pmod p$ in each congruence for this proof. We'll also
just prove it for the $u \equiv 2\delta^2$ case as the proof is essentially
the same. We use the formulae~\eqref{init_d3} and~\eqref{recur_d3} to compute
the first 9 values of $\alpha_i$ and $\beta_i$:
$$
\alpha_1 \equiv -u,\; \alpha_2 \equiv -\frac{2\delta + 4}{3},\; \alpha_3
\equiv -\frac{4\delta + 2}{3},\; \beta_1\equiv 1,\; \beta_2\equiv 3\delta,\; \beta_3\equiv -\frac{\delta}{3}
$$$$
\alpha_4 \equiv -u,\; \beta_4\equiv -\frac{3}{\delta},\; \beta_5\equiv -3,\;
\alpha_5 \equiv -\frac{8\delta + 10}{9},\; \alpha_6 \equiv -\frac{10\delta +
8}{9},\; \beta_6\equiv -\frac{\delta}{27},
$$
$$ \alpha_7 \equiv -u,\; \beta_7\equiv -3,\; \beta_8\equiv -\frac{3}{\delta},\; \alpha_8 \equiv -\frac{4\delta + 2}{3},\
\alpha_9 \equiv -\frac{2\delta + 4}{3},\; \beta_9\equiv -\frac{\delta}{3}.$$
This all clearly satisfy the equations~\eqref{lem7_eq1} --~\eqref{lem7_eq5},
except for $\alpha_5$, so let us check this.
$$
\frac{u+\alpha_2}{3}\equiv \frac{2\delta^2 - \frac{2\delta+4}{3}}{3}\equiv -\frac{8\delta+10}{9}\equiv \alpha_5.
$$
%
The base case $k=0$ has been proved.

Now we assume that the equations~\eqref{lem7_eq1} --~\eqref{lem7_eq5} are
satisfied for $\alpha_i,\beta_i$, $1\le i\le 9k$ and  verify them for
$9k+1\le i\le 9k+9$.


First, it is obvious that $\alpha_{9k+1} \equiv \alpha_{9k+4} \equiv
\alpha_{9k+7} \equiv-u$ as they are all of the form $\alpha_{3k+4}$.

Second, by~\eqref{recur_d3} we have
$$\beta_{9k+1} = \frac{\beta_{3k+1}}{\beta_{9k}\beta_{9k-1}} = \frac{\beta_{3k+1}}{-\frac{\delta}{3}\cdot (3\delta - \beta_{9k-2})} = \beta_{3k+1}$$
by the induction hypothesis. Then the equation~\eqref{recur_d3} implies that
$$
\beta_{9k+1}+\beta_{9k+2}\equiv \beta_{9k+4}+\beta_{9k+5}\equiv\beta_{9k+7}+\beta_{9k+8}\equiv u^2-v\equiv 3\delta.
$$
In particular, this together with $\beta_{9k+1}\equiv \beta_{3k+1}$ implies
that $\beta_{9k+2}\equiv \beta_{3k+2}$.

Third, we compute:
\begin{align*}
    \alpha_{9k+2} &\equiv u - \frac{\alpha_{3k+1} + uv - \alpha_{9k - 1} \beta_{9k + 1}}{\beta_{9k + 2}}\\
    &\equiv 2\delta^2 - \frac{-2\delta^2 + 2 + \frac{4\delta+2}{3} \beta_{9k + 1}}{3\delta - \beta_{9k + 1}} \\
    &\equiv \frac{-(2\delta+4)(3\delta - \beta_{9k+1})}{3(3\delta - \beta_{9(n+1) + 1})}\\
    &\equiv -\frac{2\delta+4}{3}.
\end{align*}
Thus we have $\alpha_{9k+3} \equiv u - \alpha_{9k+2} \equiv -\frac{4\delta +
2}{3}$. Finally, we use the last equation in~\eqref{recur_d3} to compute
$\beta_{9k+3}$:
$$
    \beta_{9k+3} \equiv \delta - \alpha_{9k+2}\alpha_{9k+3} \equiv \delta - \frac{(2\delta+4)(4\delta+2)}{9} \equiv \frac{\delta}{3}.
$$

Fourth, we similarly continue to compute $\beta_{9k+4} \equiv
\frac{\beta_{3k+2}}{\beta_{9k+3}\beta_{9k+2}} \equiv
\frac{\beta_{3k+2}}{-\frac{\delta}{3}\cdot \beta_{3k+2}} \equiv
-\frac{3}{\delta}$. This then implies that $\beta_{9k + 5} \equiv 3\delta +
\frac{3}{\delta} \equiv -3$.

Now we compute:
\begin{align*}
    \alpha_{9k+5} &\equiv u - \frac{\alpha_{3k+2} + u \delta - \alpha_{9k + 2} \beta_{9k + 4}}{\beta_{9k + 5}}\\
    &\equiv 2\delta^2 - \frac{\alpha_{3k+2} + 2 - \frac{2\delta+4}{3} \cdot \frac{3}{\delta}}{-3}\\
    &\equiv \frac{2\delta^2 + \alpha_{3(n+1)+2}}{3} \equiv \frac{u + \alpha_{3(n+1)+2}}{3} \\
\end{align*}
This then implies
$$
    \alpha_{9k+6} \equiv u -\frac{u + \alpha_{3(n+1)+2}}{3} \stackrel{\eqref{lem7_eq1}}\equiv \frac{u + \alpha_{3k+3}}{3}
$$
and then we compute
\begin{align*}
    \beta_{9k+6} &\equiv \delta - \alpha_{9k+5}\alpha_{9k+6}\\
    &\equiv \delta - \frac{2\delta^2 + \alpha_{3k+2}}{3}\cdot \frac{2\delta^2 + \alpha_{3k+3}}{3} \\
    &\equiv \frac{5\delta-2\delta^2(\alpha_{3k+3}+\alpha_{3k+2})-\alpha_{3k+3}\alpha_{3k+2}}{9} \\
    &\stackrel{\eqref{lem7_eq1}}\equiv \frac{\delta-\alpha_{3k+3}\alpha_{3k+2}}{9}\; \stackrel{\eqref{recur_d3}}\equiv\;
    \frac{\beta_{3k+3}}{9}.
\end{align*}

We finish the proof by computing the last triple of $\alpha$'s and
$\beta$'s. We verify that $\beta_{9k+7} =
\frac{\beta_{3k+3}}{\beta_{9k+6}\beta_{9k+5}} \equiv -3$ and
$\beta_{9k + 8} \equiv 3\delta - \beta_{9k+7} \equiv
-\frac{3}{\delta} $. Then we use already known values of
$\alpha_{9k+5}, \beta_{9k+7}, \beta_{9k+8}$ to compute:
\begin{align*}
    \alpha_{9k+8} &\equiv u - \frac{\alpha_{3k+3} + u \delta - \alpha_{9k +5} \beta_{9k + 7}}{\beta_{9k + 8}}\\
    &\equiv 2\delta^2 - \frac{\alpha_{3k+3} + 2 + 2\delta^2 + \alpha_{3k+2} }{-\frac{3}{\delta}}\\
    &\stackrel{\eqref{lem7_eq1}}\equiv -\frac{4\delta + 2}{3}.
\end{align*}
This then implies $\alpha_{9k+9} \equiv u - \alpha_{9k+8} \equiv
-\frac{2\delta+4}{3}$ and
$$
\beta_{9k+9} \equiv \delta - \alpha_{9k+8}\alpha_{9k+9} \equiv
-\frac{\delta}{3}.
$$

This finishes the inductional step, thus the proof by inductions
completes.

\end{proof}

All the cases~\eqref{case1} --~\eqref{case7} are now covered and
Theorem~\ref{th1} concludes. \endproof

\section{Further remarks}

In view of Theorem~\ref{th1} one can ask a natural question:
are~\eqref{case1} --~\eqref{case7} the only local conditions on $u,v$ which
guarantee that all the partial quotients of $g_{u,v}(z)$ are linear? In
attempt to answer this question, we conduct a computer search of all primes
$p$ between 3 and 1000 and all pairs $(u,v)\in\FF_p^2$. The search
reveals that every pair that did not seem to ever produce a value of 0 is of the conditions~\eqref{case1} --~\eqref{case7}.


These findings, while heuristic, suggest that Theorem~\ref{th1} covers all local conditions which guarantee that the series $g_{u,v}$ is badly
approximable.

Also, a quick search reveals that around $82\%$ integer pairs $(u,v) \in
[-1000,1000]^2$ satisfy at least one of the conditions~\eqref{case1}
--~\eqref{case7}. This indicates that the majority of pairs are covered by
Theorem~\ref{th1}. However there are still plenty of pairs for which the
conjecture is still to be verified. One of the smallest such pair is $(u,v) =
(2,-2)$.


\begin{thebibliography}{0}

\bibitem{adamczewski_rivoal_2009} Adamczewski, B. and Rivoal, T.,
    ``Irrationality measures for some automatic real numbers.'' \textit{Math. Proc. Cambridge Phil.
    Soc.}, 147 (2009): 659--678.

\bibitem{badziahin_2017} Badziahin, D., ``Continued fractions of certain
    Mahler functions.'' \textit{Acta Arith.}, 188 (2019), 53 -- 81.

\bibitem{badziahin_2019} Badziahin, D., ``On spectrum of irrationality
    exponents of Mahler numbers.'' \textit{Journal de Thorie des Nombres de
    Bordeaux}, 31 (2019), no 2, 431 -- 453.

\bibitem{bugeaud_2011} Bugeaud, Y., ``On the rational approximation to the
    Thue-Morse-Mahler numbers.'' \textit{Ann, Inst, Fourier (Grenoble)}, 61
    (2011), 2065--2076.

\bibitem{bugeaud_han_wen_yao_2015}
Y. Bugeaud, G. N. Han, Z. Y. Wen, J. Y Yao.
\newblock Hankel determinants, Pad\'{e} approximations and
irrationality exponents.
\newblock{\em Int. Math. Res. Notes.} V. 2016(5) (2016), 1467 -- 1496.

\bibitem{coons_2013} Coons, M., ``On the rational approximation of the sum of
    the reciprocals of the Fermat numbers.'' \textit{Ramanujan J.}, 30
    (2013), 39--65.

\bibitem{gww_2014} Guo, Y., Wen, Z. and Wu, W., ``On the irrationality
    exponent of the regular paperfolding numbers.'' \textit{Linear Algebra Appl.}, 446 (2014), 237--264.

\bibitem{poorten_1998}
A. J. Van der Poorten.
\newblock Formal power series and their continued fraction
expansion
\newblock{\em Algorithmic Number Theory.} Lecture notes in Computer
Science, 1423, pp 358 -- 371, Springer, Berlin, 1998.

\end{thebibliography}
\end{document}